\documentclass[12pt]{article} %,twocolumn
\usepackage{amsmath}
\usepackage{lscape}
\usepackage[section]{placeins}
\usepackage{mathtools}
\usepackage{amsthm,bm}
\usepackage{times}
\usepackage{graphicx}
\usepackage{amssymb}
\usepackage{url} %,hyperref
\usepackage{microtype}
\usepackage{authblk}
\usepackage{siunitx}
\usepackage{subcaption}
\usepackage[margin=1in]{geometry}
\usepackage{longtable}
\usepackage{booktabs}
\usepackage{multirow}
\usepackage{xcolor}
\usepackage{comment}
\usepackage{color,soul} 
\usepackage{graphicx} 
\usepackage{caption}
\usepackage{algorithm}
\usepackage{algorithmic}
\usepackage{booktabs} 
\usepackage{float}
\usepackage{minitoc}

\usepackage[toc,page,header]{appendix}
\usepackage{etoc}
\definecolor{darkgreen}{RGB}{0,90,0.1}
\usepackage[colorlinks=true, citecolor=darkgreen, urlcolor=darkgreen]{hyperref}
\usepackage{multicol}
\usepackage{dcolumn}
\usepackage{enumitem}
\usepackage[capitalize,noabbrev]{cleveref}
\usepackage[version=4]{mhchem} 
\usepackage[textsize=tiny]{todonotes}
\usepackage{natbib}
\usepackage{tikz}
\usetikzlibrary{calc}
\usetikzlibrary{arrows.meta,positioning,fit,backgrounds,shapes.geometric,  decorations.pathreplacing}
\hypersetup{
    colorlinks=true,
    linkcolor=blue,
    urlcolor=blue,
    citecolor=red
}

\theoremstyle{plain}

\newtheorem{theorem}{Theorem}[section]
\newtheorem{proposition}[theorem]{Proposition}
\newtheorem{lemma}[theorem]{Lemma}

\theoremstyle{definition}

\theoremstyle{remark}

\title{Optimal Stochastic Krylov Based Techniques for Large - Scale Log-Determinant Estimation } 

\vspace{0.5cm}
\author[1,3]{Verlon Roel Mbingui \thanks{Corresponding Author}}
\author[2,4]{Antoine Tambue }
\author[3]{Issa Karambal}

\affil[1]{Department of Mathematics and Statistics, The University of Dodoma, P. O. Box 338, Dodoma, Tanzania}
\affil[2]{Department of Computing Mathematics and Physics, Western Norway University of Applied Sciences, Inndalsveien 28, 5063 Bergen, Norway}
\affil[3]{African Institute for Mathematical Sciences, Research and Innovation Centre, Kigali, Rwanda}
\affil[4]{Department of Mathematics and Applied Mathematics, University of Cape Town, 7701 Rondebosch, South Africa}

\begin{document}
\maketitle

\begin{abstract}
Estimating the logarithm of the determinant of large sparse positive definite symmetric matrices is an important task in numerical linear algebra, machine learning, Gaussian processes, and uncertainty quantification.
In this work, we introduce two scalable and efficient methods for large-scale log-determinant termed the Optimal Stochastic Arnoldi with Incomplete Orthogonalization Procedure (OSA-IOP) and the Optimal Stochastic Lanczos Quadrature (OSLQ). The OSA-IOP approach extends the Incomplete Orthogonalization Procedure (IOP), originally developed for matrix exponential functions for exponential time stepping integrators, to compute the action of the matrix algorithm on a vector. We observe that combining IOP with a randomized Hutch++ algorithm,  the OSA-IOP significantly reduces computational cost while maintaining high accuracy. The OSLQ method estimates log-determinants by coupling Lanczos quadrature with Hutch++ and controlled orthogonalization, leveraging Krylov subspaces as efficient quadrature mechanisms to approximate quadratic forms involving the matrix logarithm. We derive error bounds for both methods.
Extensive numerical experiments on large-scale sparse matrices from real-world applications demonstrate the accuracy, robustness, and scalability of the proposed approaches.
\end{abstract}
\textbf{keywords : }
Log-determinant, Arnoldi algorithm, Incomplete orthogonalization procedure, Lanczos quadrature,  Hutch ++, Large matrices.

\section{Introduction}
\label{sec:introduction}
The scalability of machine learning algorithms to extremely large datasets and models has been an area of main focus among the research community. There has been immense progress through a series of advances such as first-order stochastic optimization techniques and randomized linear algebra. One of the core linear algebra operations widespread in machine learning is the Log-determinant computation of large positive definite matrices. This operation is common in various contexts for instance, as the normalization constant for multivariate Gaussian models, in which Log-determinant of covariance or precision matrices plays a fundamental role in inference, model selection, and parameter estimation for Gaussian graphical models and Gaussian processes \cite{aune2014parameter,RueHeld2005, latz2025deep}. Log-determinants also appear in Bayesian machine learning applications including sampling and variational inference\cite{aune2013iterative,MacKay2003}, as well as kernel learning \cite{han2015large}. They also appear in discrete probabilistic models such as Markov random fields \cite{wainwright2006log, RueHeld2005}.

\vspace{0.5em}
The computation of the log-determinant in large-scale settings has  shifted toward iterative and matrix-free techniques that approximate $\log(Q)v$ using only  matrix-vector products.
Such techniques form the basis for scalable algorithms for log-determinant estimation \cite{aune2014parameter,mbingui2026novel}.
The Krylov subspace techniques have turned out to be one of the most efficient techniques for computing matrix functions in large-scale problems. For an analytic function $f$, they calculate $f(Q)v$ by projecting $Q$ onto a low-dimensional Krylov space and evaluating $f$ on the small matrix resulting \cite{saad1992analysis}. Aune et al. \cite{aune2014parameter} then applied $\log(Q)v$ with a contour-integral approximation to the logarithm, solving several shifted systems along a complex contour. Although of great mathematical elegance, the algorithm is computationally costly: Each shift is an independent system, and thus no factorization or preconditioner reuse is available. In addition, no eigenvalue lies on the boundary , and numerical integration introduces additional parameters that control stability and accuracy.
%\vspace{0.5em}
In \cite{meurant2006lanczos,golub1994matrices}, the authors prove that the  quadrature rules derived from the Lanczos process are applicable to approximating bilinear forms $v^{\top}f(Q)v$ and hence establish a theoretical foundation for trace estimation of matrix functions.
Based on this idea, the Stochastic Lanczos Quadrature (SLQ) algorithm \cite{han2015large, ubaru2017fast} combines randomized probing and the Lanczos algorithm  to  efficiently estimate
$\mathrm{tr}(f(Q))$, including the log-determinant $\mathrm{tr}(\log Q)$.
SLQ is now a de facto standard approach for large SPD matrices, and has been embedded into a diverse array of applications from Gaussian process hyperparameter learning, Bayesian evidence computation, and covariance matrix regularization \cite{dong2017scalable}.\vspace{0.5em}\\
More recent developments have focused on improving stochastic efficiency and reducing variance in trace estimation. Notably, Meyer et al. \cite{meyer2021hutch++} proposed
the Hutch++ algorithm, which accelerates Hutchinson’s trace estimator by combining a low-rank sketch with a residual Hutchinson correction. Later works introduced block and adaptive Krylov variants (e.g., \cite{chen2023krylov, yeon2025bolt}) that reuse information across multiple probe vectors or time steps. Despite these advances, the underlying Krylov-based solvers for evaluating
$f(Q)v$ including the logarithm still rely on
full orthogonalization or reorthogonalization of the Lanczos basis. This phase comes to prevail for computational expense and memory demand when the size of the subspace grows, especially in ill-conditioned or large-dimensional systems \cite{ubaru2017fast, saad2003iterative}. Mbingui et al. in \cite{mbingui2026novel} introduce a faster technique based on Léja point interpolation to compute the Log-determinant based on the computation of action the matrix logarithm on a vector coupled wwith the Hucth++ algorithm. The  Léja point method avoid the full orthogonalization of existing Krylov subspace methods for Log-determinant estimation and demonstrate to be very faster, but has the same order of convergence with Chebyshev expansion method \cite{han2015large, han2017approximating}.
Reducing this cost of orthogonalization with negligible loss of accuracy is a primary open issue in large-scale matrix-function computation.

\vspace{0.5em}
In this paper,  we extend Krylov Incomplete Orthogonalization Procedure\\ Solver (KIOPS) \cite{gaudreault2018kiops,vo2017approximating}, an adaptive Krylov exponential integrator solver, to the matrix logarithm.
We propose an Arnoldi incomplete orthogonalization procedure method for $\log$ function  that uses a
length 2 \footnote{Note that the length-2 Incopmplete  Orthogonalization in the Arnoldi method, means that  each new vector is orthogonalized using only the two previous vectors instead of all previous ones.} Incomplete Orthogonalization Procedure (IOP) for approximating
$\log(Q)v$ at reduced orthogonalization cost. While the KIOPS \cite{gaudreault2018kiops,vo2017approximating} method efficiently approximates matrix exponential and related $\varphi$-functions, it has not been extended
to general analytic matrix functions such as $\log(Q)$.
The procedure combines the efficiency of short-recurrence Krylov methods with
the generality of the Arnoldi projection, allowing shift-scale stabilization and adaptive dimension control. The projected small dense matrix logarithm $\log(H_m)$ (where $H_m$ is the Hessenberg matrix) is evaluated by the Schur - Parlett algorithm, ensuring stability and high accuracy.

\vspace{0.5em}
We observe that  combining  IOP with the Hutch++ trace estimator \cite{meyer2021hutch++}, the new Arnoldi-IOP-LOG method yields a scalable and factor-free algorithm for approximating $\mathrm{tr}(\log Q)$ and $\log\det(Q)$. This combination enables low-cost log-determinant approximation of large-scale sparse SPD matrices without resorting to Cholesky factorization or full orthogonalization. The ensuing algorithm bridges two powerful paradigms; stochastic trace approximation and adaptive Krylov subspace projection and finds applications to scalable
Bayesian inference, and machine learning models on gigantic sparse systems.

\vspace{0.5em}
In other hand, Han et al. \cite{han2023suboptimal}  estimate the  log-determinant using Suboptimal subspace construction for log-determinant approximation
(So-SLQ). The method  construct a subspace from a projection-cost-preserving sketch of  $f(Q)$ and then applying SLQ on both the projected and residual parts. Their focus is theoretical as they derive explicit lower bounds on sketch dimension and Lanczos depth that guarantee a prescribed relative accuracy. Yeon et al. \cite{yeon2025bolt} takes a different direction, embedding variance reduction inside SLQ itself through a block-orthonormal Lanczos procedure (BOLT) that jointly processes multiple random vectors.
BOLT achieves Hutch++ level convergence rates but forgoes the classical SLQ structure and quadrature interpretation.
\vspace{0.5em}

We proposed a different Variance-Reduced Stochastic Lanczos Quadrature called Optimal stochastic Lanczos quadrature (OSLQ) method for estimating $\log\det(Q)$.
Our approach preserves the full Gauss-Lanczos quadrature machinery of the original SLQ \cite{ubaru2017fast} while introducing an explicit Optimal stochastic trace estimation method (Hutch++) style low-rank deflation step on the matrix $Q $ itself.
Unlike Han et al. \cite{ han2023suboptimal} constructs the subspace from 
$f(Q)S$  and relying on sketches that preserve projection costs, we construct the subspace directly from 
Q using a random range finder:
$A=\mathrm{orth}(QS), S \sim \mathcal{N}(0,I)$ \footnote{Note that $S$ is the Rademacher matrix and A the orthonormal basis obtained by S},
making the method independent of the function, lightweight and easy to implement for any analytical function $f$. The dominant spectral modes captured by $A$ are evaluated deterministically via SLQ, and estimate the residual trace on  $A^\perp$  stochastically using projected Hutchinson probes, each of which is reevaluated by Lanczos quadrature.
%\vspace{0.5em}
%\paragraph{Contributions}
The contributions of this work are as follows:
\begin{enumerate}
\item[(i)] We extend the incomplete orthogonalization (IOP) algorithm of KIOPS to the logarithm of a matrix and propose an efficient algorithm  that we called Arnoldi-IOP-LOG for approximating $\log(Q)v$.
    \item[(ii)] We introduce and use an adaptive shift-scale stabilization strategy which minimizes the approximation error on the spectrum interval.
\item[(iii)] We merge the Arnoldi-IOP-LOG with the Hutch++ estimator to calculate
          $\mathrm{tr}(\log Q)$ efficiently from only matrix-vector products,
          enabling scalable estimation of the log-determinant for large sparse symmetric positive definite matrices.
\item[(iv)] We propose  an Optimal  Stochastic Lanczos Quadrature (OSLQ) algorithm that integrate  Hutch++ \cite{meyer2021hutch++} style low-rank deflation with classical SLQ \cite{ubaru2017fast}, enabling accurate and low-variance Log-determinant estimation  using only matrix - vectors multiplications with the SPD matrix $Q$, without requiring explicit evaluations of $\log(Q)$ and block-Lanczos transformation.
\item[(v)] We provide error bounds for both OSLQ and OSA-IOP  separately and  show with numerical experiments  that the suggested techniques attain similar accuracy as the SLQ and (So-SLQ) methods with reduced cost. 
\end{enumerate}
%\paragraph{Outline} 
The rest of the paper is organized as follows.  \Cref{A-IOP} describes  Arnoldi Incomplete Procedure for computing the action of the logarithm of the matrix to a vector and  how to estimate the Log determinant via  Arnoldi Incomplete Procedure  and Hutch ++  along with the error bound associated.  \Cref{sec_So_slq} describes  the OSLQ method together with the  error associated to OSLQ. \Cref{Kry: experiments} reports numerical experiments on  real-world sparse matrices, comparing the performance of our methods ( OSLQ and OSA-IOP) with existing methods. 
\section{Log determinant estimation with the Arnoldi Incomplete Orthogonalization technique}
\label{A-IOP}
In the remainder of this work,  $Q \in \mathbb{R}^{n\times n}$ denotes a symmetric positive matrix (SPD). Then logarithm of the determinant of $Q$ is given by 
\begin{equation}
    \log(\det Q) = \mathrm{tr}(\log (Q )),
    \label{Léja1}
\end{equation}
where 
\begin{equation}
 \mathrm{tr}(\log Q) =\sum_{j=1}^{n}e_j^T\log(Q)e_j,
    \label{Léja2}   
\end{equation}
with $e_j$ a standard basis vector. 
For a large matrix $Q$, computing the logarithm of the determinant from the right-hand side of the equation \eqref{Léja2} becomes computationally expensive as $n$ grows. Instead, we consider approximating it using the Monte Carlo method, i.e., 
\begin{equation}
\label{TE}
    \mathrm{tr}(\log(Q))\approx \dfrac{1}{m_{vec}}\sum_{j=1}^{m_{vec}}v_j^T\log(Q)v_j,
%\label{Léja8}
\end{equation}
where $v_j\in\mathbb{R}^n$ is a vector whose entries are drawn from $\{-1,1\}$ with equal probability, and $m_{vec}<n$.
 This technique is well known as the standard Hutchinson’s stochastic trace estimator \cite{bai1996some}. (However, in general, its convergence is slow due to the high variance of the distribution).
 The main limitation of the standard Hutchsinson's trace  approximation estimator method in \eqref{TE} comes  from  its slow convergence rate and high variance. Indeed $\mathcal{O}(1/\epsilon^2)$ matrix-vector multiplication is required to provide satisfactory   estimates of order 
$\epsilon$. To  address this, Meyer et al. \cite{meyer2021hutch++} introduced optimal stochastic trace estimation (Hutch++). Hutch++ was designed as an improved estimator that achieves significant variance reduction and requires only multiplication queries of matrix-vectors $\mathcal{O}(1/\epsilon)$ to provide satisfactory estimates.
\subsection{The Arnoldi Incomplete Orthogonalization  Technique for the action of the matrix logarithm}
\label{subsec:arnoldi_iop_log}

Building on the KIOPS framework \cite{gaudreault2018kiops}, we propose an algorithm called \textit{Arnoldi-IOP-LOG} for efficiently computing the action of the matrix logarithm on a vector, that is 
\begin{equation}
    y = \log(Q)\,v,
\end{equation}

Let $(V_m,H_m)$ be the pair generated by the length-2
Incomplete Orthogonalization Procedure (IOP) applied to $Q$ and vector $v$,
satisfying
\begin{equation}
    Q V_m = V_m H_m + h_{m+1,m} v_{m+1} e_m^{\top},
    \label{eq:iop_log_proj}
\end{equation}
where the vector $e_m = (0,\dots,0,1)^\top \in \mathbb{R}^{m}$ represents the last canonical basis vector of $\mathbb{R}^{m}$, the matrix $V_m = [v_1, v_2, \ldots, v_m]$ spans the Krylov subspace defined as $\mathcal{K}_m(Q,v) = \mathrm{span}\{v, Qv, \ldots, Q^{m-1}v\}$, and the matrix $H_m \in \mathbb{R}^{m\times m}$ is the upper Hessenberg representation of the matrix $Q$ with respect to the non-orthogonal basis. %The matrix $H_m$ has a banded representation and can be considered as an oblique projection of the action  of the matrix $Q$ on the Krylov subspace. 
The entry $h_{m+1,m} \in \mathbb{R}$ is the residual of the projection onto Krylov subspace \cite{gaudreault2018kiops}.
For any analytic matrix function $f$, Saad's projection argument
and its KIOPS generalization \cite[Theorem 2]{gaudreault2018kiops}
imply the  following approximation
\begin{equation}
    f(Q)v \approx \|v\|_2\,V_m\,f(H_m)e_1.
    \label{eq:iop_log_approx}
\end{equation}
Choosing $f(x) = \log(x)$ yields the Arnoldi-IOP-LOG approximation
\begin{equation}
    y=\log(Q)v \;\approx\; \|v\|_2\,V_m\,\log(H_m)e_1.
    \label{eq:iop_log_main}
\end{equation}

\vspace{0.5em}
%\paragraph{Shift-scale stabilization.}
To improve numerical stability and spectral conditioning,
we introduce a shift-scale transformation based on a scalar $\gamma > 0$
\begin{equation}
    y = \log(\gamma)\,v + \log\!\Big(I + \tfrac{1}{\gamma}(Q - \gamma I)\Big)v.
    \label{eq:iop_shift_scale}
\end{equation}
The Krylov basis is then constructed for the transformed matrix
\begin{equation}
\widetilde{Q} = I + (Q - \gamma I)/\gamma
\label{eq_scaled_transform matrix},
\end{equation}
such that $\sigma(Q)\subset [\lambda_{min},\lambda_{max}]$.\\
The spectral interval of the preconditioned matrix $\tilde Q$ is 
$$\sigma(\tilde Q)\subset \left[\frac{\lambda_{min}}{\gamma},\frac{\lambda_{max}}{\gamma}\right].$$
Choosing $\gamma = \sqrt{\lambda_{min}\lambda_{max}}$, yields,
$$\frac{\lambda_{min}}{\gamma}=\sqrt{\frac{\lambda_{min}}{\lambda_{max}}}=\frac{1}{\sqrt{\kappa}},$$
and 
$$\frac{\lambda_{max}}{\gamma}=\sqrt{\frac{\lambda_{max}}{\lambda_{min}}}=\sqrt{\kappa},$$
where $\kappa=\dfrac{\lambda_{max}}{\lambda_{min}}$(condition number). The spectrum of $\tilde Q$ is therefore contained in $\left[\frac{1}{\sqrt{\kappa}}, \sqrt{\kappa} \right]$ which centered around $1$ on the logarithmic scale.
\begin{proposition}
    The choice of $$\gamma=\sqrt{\lambda_{min}\lambda_{max}}$$ minimizes the maximum  logarithm distance between eigenvalues of $\tilde Q$ from 1, i.e 
    \begin{equation}
         \gamma^\star = \arg\min_{\gamma>0} \max_{\lambda=\sigma(Q)} \big\vert \log \frac{\lambda_i}{\gamma}\big\vert.
    \end{equation}
\end{proposition}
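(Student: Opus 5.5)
The plan is to reduce the min–max problem to a one-dimensional minimization in the variable $t=\log\gamma$.

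\textbf{Step 1 (reduce to the extreme eigenvalues).} Since $Q$ is SPD, every eigenvalue lies in $[\lambda_{min},\lambda_{max}]$ with $\lambda_{min}>0$, and both endpoints are attained. For fixed $\gamma$, the map $\lambda\mapsto|\log\lambda-\log\gamma|$ is convex in $\log\lambda$. Its maximum over the spectrum is therefore attained at one of the two extreme eigenvalues, so
\[
F(\gamma):=\max_{\lambda\in\sigma(Q)}\Big|\log\frac{\lambda}{\gamma}\Big| = \max\{\,|a-t|,\;|b-t|\,\},
\]
where $a=\log\lambda_{min}$, $b=\log\lambda_{max}$ and $t=\log\gamma\in\mathbb{R}$. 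Because $a\le b$, this equals $\max\{t-a,\;b-t\}$. If $t<a$, then $b-t$ is the larger term; if $t>b$, then $t-a$ is. In both cases the formula still holds, since $|x|\ge x$ and the other term dominates. A quick check of these three cases ($t<a$, $a\le t\le b$, $t>b$) settles the identity.

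\textbf{Step 2 (solve the scalar problem).} We now minimize $g(t)=\max\{t-a,\,b-t\}$ over $t\in\mathbb{R}$. The function $t-a$ is strictly increasing and $b-t$ is strictly decreasing, so their maximum is minimized exactly where they cross, at $t^\star=(a+b)/2$. The minimum value is $g(t^\star)=(b-a)/2=\tfrac12\log\kappa$.

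Uniqueness follows from the same monotonicity. For $t>t^\star$ we have $g(t)\ge t-a>t^\star-a=g(t^\star)$, and for $t<t^\star$ the symmetric bound with $b-t$ applies. Because $\log$ is a bijection from $(0,\infty)$ to $\mathbb{R}$, we get $\gamma^\star=e^{t^\star}=\sqrt{\lambda_{min}\lambda_{max}}$. The optimal value is $\log\sqrt{\kappa}$, which is consistent with the spectral interval $[1/\sqrt{\kappa},\sqrt{\kappa}]$ derived just before the statement.

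\textbf{Main obstacle.} The only subtlety is Step 1: we must justify that interior eigenvalues never determine the maximum, and that the extremes $\lambda_{min},\lambda_{max}$ really belong to $\sigma(Q)$ rather than merely bounding it. Convexity of the absolute value in the variable $\log\lambda$, together with the fact that $\lambda_{min}$ and $\lambda_{max}$ are themselves eigenvalues, handles this directly.

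If instead the statement is read with $\sigma(Q)$ only known to lie in the interval $[\lambda_{min},\lambda_{max}]$, then the same argument shows that $\gamma^\star$ minimizes the worst case over that interval. I would remark on this reading in the proof.
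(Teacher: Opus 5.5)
Your proof is correct and follows the same route as the paper: reduce the maximum to the two extreme eigenvalues, then balance $\log(\lambda_{max}/\gamma)$ against $\log(\gamma/\lambda_{min})$. Working in $t=\log\gamma$, you also handle $\gamma\notin[\lambda_{min},\lambda_{max}]$ and use the monotonicity of $t-a$ and $b-t$ to show that the crossing point is the unique minimizer; the paper's proof simply assumes both points.
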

\begin{proof}
    Since $$\sigma(Q)\subset [\lambda_{min},\lambda_{max}],$$
    then the maximum is attained at the endpoints : 
    $$\max_{i}\big\vert \log\frac{\lambda_i}{\gamma}\big \vert = \max \left( \log\frac{\lambda_{max}}{\gamma},\log\frac{\gamma}{\lambda_{min}}\right). $$
Supposed $\lambda_{min}\le \gamma\le \lambda_{max}$, This minimax achieved when both terms are equal :
$$\log\frac{\lambda_{max}}{\gamma}=\log\frac{\gamma}{\lambda_{min}},$$
this implies 
$$\frac{\lambda_{max}}{\gamma}=\frac{\gamma}{\lambda_{min}},$$
which is equivalent to 
$$\gamma^2=\lambda_{min}\lambda_{max}.$$
Therefore 
$$\gamma^\star=\sqrt{\lambda_{min}\lambda_{max}}.$$
\end{proof}
% \begin{remark}
% Let $B=(Q-\lambda I)/\gamma$, such that $\tilde Q = I+B$.  By choosing
% $\gamma=\sqrt{\lambda_{min}\lambda_{max}},$ we have 
% $$\sigma(\tilde Q)\subset [1/\sqrt{\kappa},\sqrt{\kappa}],$$
% centered at $1$. For the Arnoldi method,  the eigenvalues of the reduced Hessenberg matrix $H_m$ are Ritz values that approximate those of $\tilde Q$. When the latter are clustered near $1$, the Krylov subspace of dimension $m$ captures almost all of the spectral information in just a few iterations, which accelerates convergence and improves the conditioning of $H_m$.
% \end{remark}

The spectral interval of the matrix $Q$ estimated using Gershgorin Circle theorem \cite{thomas2013numerical}. In case Gershgorin yields a negative lower bound due to lack of diagonal dominance, we replace it with a small positive threshold to reflect the known positive definiteness of the matrix and  to ensure numerical stability of the square-root used in the
shift-scale parameter. The method is summarized by the following algorithm

\begin{algorithm}
\caption{ Arnoldi  Incomplete Orthogonalization Procedure to approximate $\log(Q)v$. }
\label{alg_iop-logAv}
\begin{algorithmic}[1]
\REQUIRE SPD matrices  $Q:\mathbb{R}^n\to\mathbb{R}^n$, vector $v\neq 0$, scaling $\gamma>0$, tolerance $\mathrm{tol}$, maximum dimension $m_{\max}$
\ENSURE $y_m \approx \log(Q)v$
 \STATE  $v_1 \gets v/\Vert v \Vert_2$
 \STATE Define $\widetilde{Q} \gets I + \dfrac{Q-\gamma I}{\gamma}$ \hspace{6cm} \COMMENT{ $\widetilde{Q}=Q/\gamma$}
 \STATE $c \gets \log(\gamma)$
 \STATE$V(:,1)\gets v_1$, initialize $H\in\mathbb{R}^{(m_{\max}+1)\times m_{\max}}$ as a zero matrix
 \STATE $m\gets 1$, \quad $h \gets +\infty$
\WHILE{$m \le m_{\max}$ \textbf{and} $h \ge \mathrm{tol}$}
   \STATE $w \gets \widetilde{Q}(V(:,m))$
  \IF{$m=1$}
     \STATE $H_{m,m} \gets V(:,m)^\top w$
     \STATE $w \gets w - H_{m,m}V(:,m)$
  \ELSE
    \FOR{$j=m-1,m$} 
    \STATE $ H_{j,m} \gets V(:,j)^\top w $
    \STATE $w \gets w - H_{j,m}V(:,j)$
    \ENDFOR
  \ENDIF
   \STATE $h \gets \|w\|_2$, \quad $H_{m+1,m}\gets h$
  \IF{$h < \mathrm{tol}$}  \STATE\textbf{break} \ENDIF
   \STATE $V(:,m+1)\gets w/h$
   \STATE $m\gets m+1$
\ENDWHILE
 \STATE Form $H_m = H(1\!:\!m,1\!:\!m)$ and $V_m = V(:,1\!:\!m)$
 \STATE Compute $L_m = \log(H_m)$ and $e_1=(1,0,\dots,0)^\top$
 \STATE \RETURN $ y_m \gets \Vert v \Vert_2V_m L_m e_1 + \log(\gamma)v$.
\end{algorithmic}
\end{algorithm}
\newpage
 \Cref{alg_iop-logAv}, computes the action of $\log(Q)$ on a vector $v$ by building a Krylov subspace using Arnoldi with incomplete orthogonalization of length $2$. Starting from the normalized vector $v_1=v/\Vert v\Vert_2$, the algorithm builds a basis $V_m$ of the Krylov space $\mathcal{K}_m(\widetilde{Q},v)=\mathrm{span}\{\widetilde{Q},\widetilde{Q}v,\dots,\widetilde{Q}^{m-1}v\}$, but instead of performing the full orthogonalization, the vectors are only orthogonalized against the last two vectors. This results in a small size Hessenberg matrix $H_m$ such that $\widetilde{Q}V_m=V_m H_m$. The matrix logarithm is applied to the  matrix $H_m$ to compute the action on the vector $v$ and the final result $y_m=\Vert v\Vert_2V_m\log(H_m)e_1+\log(\gamma)v$. %  with $\gamma $, the spectral scaling to cluster the eigenvalues near $1$.

\subsection{Trace estimation for log-determinant}
In this section, we describe our proposed algorithm for estimating the Log-determinant of a symmetric positive definite matrix using Arnoldi-IOP-LOG and Hutch ++.

Let $Q$ be a symmetric and positive definite matrix. We want to estimate \\ $\mathrm{tr(\log(Q))}$ efficiently. Given an efficient routine to evaluate $y=\log(Q)v$, we estimate the trace $\mathrm{tr(\log Q)}$ using the Hutch ++ estimator \cite{meyer2021hutch++}. Let $S\in \mathbb{R}^{n\times m_{vec}/3}$ be a sketch matrix whose entries are Rademacher or Gaussian. 

As in \cite{mbingui2026novel}, we suppose that the matrix $\widetilde{Q}$ defined  in \eqref{eq_scaled_transform matrix} is SPD. Then $\log(\widetilde{Q})$ is negative for all $\alpha \in (0,1)$, with $\alpha = \lambda_{min}(\widetilde{Q})$. This could therefore lead to negative eigenvalues. 
%To ensure that this is not the case, that is, that $\log(\widetilde{Q})$ is indeed SPD.
We then define the following:

\begin{equation}
L\coloneqq
\begin{cases}
    \log(\widetilde{Q}) + \log(\gamma/\alpha)\mathrm{I},&\textnormal{if}\,\alpha<1\\
    \log(\widetilde{Q}),&\textnormal{otherwise}.   
\end{cases}
\label{eq_scaled_Q_krylov_matrix}
\end{equation}
where $\mathrm{I} \in \mathbb{R}^{n\times n}$ is the identity matrix and $\alpha = \lambda_{min}(Q).$ 
Consequently, 
$L$ is guaranteed to be PSD, and so Hutch++ can be applied. From \eqref{eq_scaled_Q_krylov_matrix}, we have, when $\lambda_{min}(Q)<1$,
\begin{equation}
   %\widehat{\log(\det(Q))}  = n\log(\sigma)+\mathrm{tr}( \log(\widetilde{Q})).
   \log(\det(Q)) = n\log(\alpha)+\mathrm{tr}( L).
\end{equation}
Therefore, to compute $\log(\det(Q))$, it suffices to efficiently estimate $\mathrm{tr}(L)$. Therefore the Hutch++ applied to $Q$ is given in the following algorithm 
   
\begin{algorithm}
 \caption{Log determinant estimation of a SPD matrix using Arnoldi Incomplete Orthogonalization Procedure and Hutch++}
\label{algo_log_det_iop}
\begin{algorithmic}[1]
 \REQUIRE SPD matrix, $Q\in \mathbb{R}^{n\times n}$. The number of queries, $m_{vec}$. Spectral interval, $[\lambda_{min},\lambda_{max}]$.
 \STATE Define a scaled matrix $L\in \mathbb{R}^{n\times n} $ as in \eqref{eq_scaled_transform matrix}. %$\widetilde{Q}= \frac{1}{\sigma}Q$ with $\sigma\le \lambda_{min}.$
 \STATE  Sample sketching matrix $S\in \mathbb{R}^{n\times \frac{1}{3}m_{vec}}$ a Rademacher (or Gaussian) random matrix with i.i.d $\{+1,-1\} $ with probability $\dfrac{1}{2}$.
 \STATE  Compute $Y=\log(L)S = [LS_1,\dots,LS_{m_{vec}/3}]$ using the Algorithm \ref{alg_iop-logAv}.
 \STATE  Form an orthonormal basis $A$ ($A \in \mathbb{R}^{n\times  \frac{1}{3}m_{vec}}$) for the span of $Y$ via $QR$ decomposition.
 \STATE  Compute the small deterministic term $\mathrm{tr}(A^\top L A)$ by applying $L$ to the column of $A$  and approximate each $Lv_i$ using Algorithm \ref{alg_iop-logAv}.
 \STATE Estimate the residual correcting additional random vectors $u_j = (I-AA^T)v_j$ :
\begin{equation*}
    \mathrm{\widehat{tr}}_{res} = \dfrac{3}{ m_{vec}}\sum_{j=1}^{ \frac{1}{3}m_{vec}}u_j^\top Lu_j,
\end{equation*}
where $Lu_j$  are approximated using Algorithm \ref{alg_iop-logAv}.
 \STATE  $ \widehat{\mathrm{tr}(L)}\approx \mathrm{tr}(A^\top L A) + \mathrm{\widehat{tr}}_{res}.$
 \STATE \textbf{Output: } $ \widehat{\log(\det(Q))} \approx n\log(\alpha)+\widehat{\mathrm{tr}( L)}$
\end{algorithmic}
\end{algorithm}
\newpage
\subsection{OSA-IOP Error Analysis}
%\subsubsection{Optimal Stochastic Arnoldi incomplete Orthogonalisation Procedure error analysis}
In this section, we aims to derive the errors bounds for the OSA-IOP method. We split  our analysis in into two parts.  The first part deals with  the Arnoldi-IOP-LOG approximation error for $\log(Q)v$  while the  second provide  the stochastic error arriving from the Optimal stochastic trace estimator (Hutch++). %We  end to combine these  two parts to obtain  our OSA-IOP error  bound for Log-determinant estimation.
\subsubsection{Setup and assumptions}
We assume that $Q\in \mathbb{R}^{n\times n}$ is symmetric positive definite (SPD). We denote by  $\kappa = \dfrac{\lambda_{max}}{ \lambda_{min}}$ the condition number of $Q$, where $\lambda_{max}$ and $\lambda_{min}$ are respectively the maximum and minimum eigenvalues of $Q$. 
%We consider the matrix function 
%$$ f(x) = \log(x) $$
%restricted to $(0,\infty )$.
Let
$$ F=\log(Q) v \quad \text{and} \quad \widetilde{F}_m = V_m\log(H_m)V_m^{\top},$$ 
where $V_m \in \mathbb{R}^{n\times m}$ represents  the incomplete orthonormalized Krylov basis produced with a length 2 incomplete orthogonalization (IOP), and $H_m \in \mathbb{R}^{m\times m} $ the corresponding  Hessenberg matrix. In addition, we define the error
$$ r(v) : = \log(Q)v - y_m, $$
 the error, where  $y_m$  is the approximation of $\log(Q)v$ given  by
\begin{equation}
    y_m := \Vert v \Vert_2 V_m \log(H_m)e_1, \qquad v = \frac{v}{\Vert v\Vert}_2.
\end{equation}
 Applying a shift-scale transformation  with $\gamma $ yields
$$\log(Q)v=\log(\gamma)v+\log\big(I+\frac{1}{\gamma}(Q-\gamma I)\big)v.$$
We begin by analyzing the "ideal" Krylov subspace with exact orthogonalization, then show how incomplete orthogonalization enters as a multiplication perturbation factor. 
\begin{lemma}[Krylov error for $\log(Q)v$]\footnote{Proof in Apendix \ref{Appendix 1}}
Let  $y_m$ be the m-step Krylov approximation of $\log(Q)v$ with the exact orthogonalization. Assume $\log$ is analytic in a Bernstein ellipse $\mathcal{E}_\rho$ containing $[\lambda_{min}, \lambda_{max}]$. Then for every $m\geq 1$,
\begin{equation}
  \Vert \log(Q)v - y_m\Vert_2\leq \frac{4M_\rho}{(\rho-1)\rho^{m}}\Vert v\Vert_2 , \qquad \text{where}~ M_\rho = \max_{z\in \mathcal{E}_\rho}\vert \log(z)\vert .  
\end{equation}
\label{lem_IOP_log_error}
\end{lemma}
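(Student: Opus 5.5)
The plan is the classical polynomial-exactness plus best-approximation argument for Krylov methods (Saad 1992), combined with Bernstein's theorem on Chebyshev approximation of analytic functions.

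\textbf{Step 1: polynomial exactness.} With exact orthogonalization, $V_m$ has orthonormal columns and $H_m=V_m^\top Q V_m$ is symmetric. Its spectrum therefore lies in $[\lambda_{min},\lambda_{max}]$, because Ritz values are contained in the numerical range. The standard exactness lemma says that for every polynomial $p$ of degree at most $m-1$,
\[
p(Q)v=\|v\|_2V_mp(H_m)e_1 .
\]
It follows by induction from the Arnoldi relation \eqref{eq:iop_log_proj}: $Q^kv_1=V_mH_m^ke_1$ for $k\le m-1$, since the residual term $h_{m+1,m}v_{m+1}e_m^\top$ only acts on components that $H_m^{k-1}e_1$ does not reach before degree $m$.

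\textbf{Step 2: reduction to best approximation.} Subtract an arbitrary $p\in\mathcal P_{m-1}$ and write
\[
\log(Q)v-y_m=(\log(Q)-p(Q))v-\|v\|_2V_m\big(\log(H_m)-p(H_m)\big)e_1 .
\]
Both $Q$ and $H_m$ are symmetric with spectra in $I=[\lambda_{min},\lambda_{max}]$, and $\|V_m\|_2=1$. Hence
\[
\|\log(Q)v-y_m\|_2\le 2\|v\|_2\min_{p\in\mathcal P_{m-1}}\max_{x\in I}|\log x-p(x)| .
\]

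\textbf{Step 3: Bernstein's bound.} Map $I$ affinely to $[-1,1]$, and understand $\mathcal E_\rho$ as the image of the standard Bernstein ellipse with foci at $\lambda_{min},\lambda_{max}$. Such an ellipse avoids $0$ for suitable $\rho$, so $\log$ is analytic inside it by the hypothesis. Truncating the Chebyshev expansion gives coefficients with $|c_k|\le 2M_\rho\rho^{-k}$. The truncation error after degree $m-1$ is then
\[
\sum_{k\ge m}2M_\rho\rho^{-k}=\frac{2M_\rho}{(\rho-1)\rho^{m-1}} .
\]

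\textbf{Step 4: matching the constant.} Inserting this into Step 2 gives $\frac{4M_\rho}{(\rho-1)\rho^{m-1}}\|v\|_2$, which has one power of $\rho$ fewer than the statement. To recover $\rho^{-m}$ I would try the finer exactness that holds in the symmetric case: the Lanczos projection $e_1^\top$-type identities are exact for polynomials of degree up to $m$ when we only need $V_mp(H_m)e_1=p(Q)v_1$ modulo the residual direction. Alternatively, one can absorb the factor $\rho$ into $M_\rho$ by slightly shrinking the ellipse, or simply use the degree-$m$ truncation bound $\frac{2M_\rho\rho^{-m}}{\rho-1}$.

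I expect this bookkeeping of the exact polynomial degree to be the only delicate point. If it fails, I would state the bound with $\rho^{m-1}$, or note that the extra $\rho$ is absorbed into the constant.
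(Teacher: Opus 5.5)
Your Steps 1--3 are sound and are the standard argument. They combine exactness of $\|v\|_2V_mp(H_m)e_1$ for $\deg p\le m-1$, the estimate $\|\log(Q)v-y_m\|_2\le 2\|v\|_2E_{m-1}$, and Bernstein's bound $E_{m-1}\le 2M_\rho/((\rho-1)\rho^{m-1})$. Here $E_{m-1}$ is the best uniform error of degree-$(m-1)$ approximation to $\log$ on $[\lambda_{min},\lambda_{max}]$. Together they give $\frac{4M_\rho}{(\rho-1)\rho^{m-1}}\|v\|_2$.

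Step 4 is where the proof breaks, and none of your three repairs works.
\begin{enumerate}
\item Exactness does not extend to degree $m$ for the vector. From $Q^{m-1}v_1=V_mH_m^{m-1}e_1$ and the Arnoldi relation,
\[
Q^mv_1-V_mH_m^me_1=h_{m+1,m}\,(e_m^\top H_m^{m-1}e_1)\,v_{m+1}=\Big(\prod_{j=1}^{m}h_{j+1,j}\Big)v_{m+1},
\]
which is nonzero unless the Krylov space is invariant. Being exact ``modulo the residual direction'' does not help, because that $v_{m+1}$-component is part of the error you must bound. The higher exactness behind ``$e_1^\top$-type identities'' is the Gauss-quadrature property $e_1^\top p(H_m)e_1=v_1^\top p(Q)v_1$ for $\deg p\le 2m-1$. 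It concerns the scalar quadratic form (this is what drives the $\rho^{2m}$ rate for Lanczos quadrature) and says nothing about the vector $p(Q)v$.
\item The degree-$m$ truncation bound presupposes exactly this false exactness.
\item The constant is explicit, so nothing can be absorbed. Absorbing $\rho$ means proving the bound with $\rho M_\rho$ in place of $M_\rho$, which is a different statement. Shrinking the ellipse to $\rho'<\rho$ merely replaces $\rho$ by $\rho'$ throughout.
\end{enumerate}

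In fact no bookkeeping can close this gap, because the inequality with $\rho^{m}$ is false. Take $Q=\mathrm{diag}(0.9,1.1)$, $v=(1,1)^\top/\sqrt2$ and $m=1$. The Rayleigh quotient is $1$, so $y_1=\log(1)\,v=0$, and the error is $\|\log(Q)v\|_2\approx 0.1005$. Let $\mathcal E_\rho$ be the ellipse with foci $0.9,1.1$ and $\rho=10$. It is centred at $1$ with semi-axes $0.505$ and $0.495$ and lies in the right half-plane, so $\log$ is analytic there and $M_\rho\approx|\log 0.495|\approx 0.70$. Even the crude bound $M_\rho\le\sqrt{0.71^2+(\pi/2)^2}<1.73$ suffices for what follows. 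The claimed bound is about $4(0.70)/(9\cdot 10)\approx 0.031<0.1005$, whereas your $\rho^{m-1}$ bound gives about $0.31$.

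So your fallback of stating the result with $\rho^{m-1}$ is the correct conclusion. The fallback of ``absorbing the constant'' is not. The $\rho^{m-1}$ bound is also what the paper's appendix proof actually arrives at: its final display has $\rho^{m-1}$, not $\rho^m$.

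For that corrected statement, your route is more rigorous than the paper's. The paper writes $y_m=p_{m-1}(Q)v$ with $p_{m-1}$ interpolating $\log$ at the Ritz values. It then simply asserts that $\max_i|\log\lambda_i-p_{m-1}(\lambda_i)|$ is bounded by the best-approximation error, but an interpolant at the Ritz values need not be near-best. Your step of subtracting an arbitrary $p$ and bounding both terms is what legitimately yields $2E_{m-1}$, and hence the constant $4$.
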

%\paragraph{\textbf{Effect of the shift-scale}} 
We apply the Krylov method to the shifted and scaled matrix $$ \widetilde{Q}=\frac{1}{\gamma}Q, \qquad \text{with}~ \gamma = \sqrt{\lambda_{min}\lambda_{max}}.$$
The eigenvalues of $\widetilde{Q}$ lies in
$$ \Big[\frac{\lambda_{min}}{\gamma},\frac{\lambda_{max}}{\gamma} \Big]=\Big[\kappa^{-1/2},\kappa^{1/2}\Big], \qquad \kappa = \frac{\lambda_{max}}{\lambda_{min}}.$$
This selection reduces the maximum deviation $\big \vert \log(\lambda/\gamma)\big \vert$ over $\lambda \in [\lambda_{min}, \lambda_{max}]$ and centers the interval in log-space.  Thus, the Bernstein ellipse $\mathcal{E}_\rho$ can be chosen more closely around $\Big[\kappa^{-1/2},\kappa^{1/2}\Big]$ compared to $[\lambda_{min},\lambda_{max}]$, leading to a reduced $M_\rho$.
The shift-scale transformation does not incur extra approximation error since 
$$ \log(Q)v = \log(\gamma)v + \log(\widetilde{Q})v,$$
and the Krylov approximation is applied only to $\log(\widetilde{Q})v.$
\vspace{0.5em}
\subsubsection{Impact of the Incomplete Orthogonalization}
The Arnoldi-IOP-LOG method employs an incomplete orthogonalization window of length-2, which means that the basis $V_m$ is not precisely orthonormal. Consequently, the projected $H_m$ differs from the exact projection and the error analysis is more complex. However, the IOP can be seen as the approximation of ideal Arnoldi relation with a perturbation analysis applicable:
$$ \widetilde QV_m = V_mH_m + R_m, $$
where $\Vert R_m\Vert_2$ quantifies the loss of orthogonality and deviation from exact projection. Assuming $R_m$ is sufficiently small \footnote{Controlled by the local orthogonalization tolerance and bounded loss of orthoginality}, the error in $\log(\widetilde Q)v$ is comparable to ideal Krylov error up to  constant factor. Thus we get

$$ \Vert \log(\widetilde Q)v - y_m^{IOP}\Vert_2\leq C_{inc} \frac{4M_\rho}{(\rho-1)\rho^{m}}\Vert v\Vert_2.$$
As shown by Gaudreault et al. \cite[Sec.3.3, pp.~243-244]{gaudreault2018kiops},
the incomplete orthogonalization procedure (IOP) produces a Krylov basis that is not exactly
orthonormal but still preserves the polynomial exactness properties required for matrix-function approximations. Their numerical experiments \cite[Sec.4]{gaudreault2018kiops} further demonstrate that IOP-based Krylov
approximations have essentially the same accuracy as full Arnoldi, indicating that the
perturbation introduced by IOP can be modeled by a stability factor $C_{\mathrm{inc}}\ge 1$ that remains very close to 1 in practice.
\begin{lemma}[Theorem 4.1 and Corollary 4.2, \cite{mbingui2026novel}]
Let $L$ be defined as in \eqref{eq_scaled_Q_krylov_matrix} with $Q \in \mathbb{R}^{n\times n}$ a symmetric positive definite matrix. If the Hutch ++ is implemented with $m_{vec}= \mathrm{O}\left(\sqrt{1/\delta}/\epsilon + \log(1/\delta) \right)$. Then with probability at least $1 -\delta $, with the relative error $\epsilon \in (0,1), ~~\delta \in (0,1)$, we have
\begin{equation}
(1-\epsilon)\mathrm{tr}(L)+ n\log(\alpha) \lesssim \log(\widehat {\det (Q)}) \lesssim (1+\epsilon)\mathrm{tr}(L)+ n\log(\alpha),
\end{equation}
where $\widehat{\log(\det(Q))}$ is the log-determinant approximation of the matrix $Q$ using \Cref{algo_log_det_iop} .
\end{lemma}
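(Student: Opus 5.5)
The statement is the relative-error guarantee of Hutch++ in \cite{meyer2021hutch++}, transferred to the shifted matrix $L$ of \eqref{eq_scaled_Q_krylov_matrix}. The argument reduces to verifying the hypotheses of that theorem and then undoing the shift. Throughout, I treat the matrix--vector products $Lv$ as exact. The Krylov error from \Cref{lem_IOP_log_error} and the factor $C_{inc}$ are handled separately when the two error sources are combined, and this is why the statement carries ``$\lesssim$'' rather than strict inequalities.

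\textbf{Step 1: $L$ is symmetric positive semidefinite.} Since $Q$ is SPD, so is $\widetilde Q = Q/\gamma$. Hence $\log(\widetilde Q)$ is symmetric with eigenvalues $\log(\lambda_i/\gamma)$.

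In the case $\alpha<1$, the added shift $\log(\gamma/\alpha)\mathrm{I}$ moves every eigenvalue to
\[
\log(\lambda_i/\gamma)+\log(\gamma/\alpha)=\log(\lambda_i/\alpha)\ge 0,
\]
because $\lambda_i\ge\alpha=\lambda_{min}(Q)$. Thus $L=\log(Q)-\log(\alpha)\mathrm{I}\succeq 0$. This also gives the identity
\[
\log\det Q=\mathrm{tr}(L)+n\log\alpha .
\]
In the other case I will read the definition as $L=\log(Q)$ with $\alpha\ge 1$, so again $L\succeq0$. To keep the final formula uniform, I will either insert the same shift or set the offset term to $0$.

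\textbf{Step 2: apply Hutch++ to $L$.} \Cref{algo_log_det_iop} is exactly Hutch++ applied to $L$, with $m_{vec}/3$ sketch vectors, $m_{vec}/3$ projection vectors and $m_{vec}/3$ residual probes. I therefore invoke the high-probability theorem of \cite{meyer2021hutch++}: for any PSD $L$, with
\[
m_{vec}=O\!\left(\sqrt{\log(1/\delta)}/\epsilon+\log(1/\delta)\right),
\]
and hence certainly under the stated, larger count $O(\sqrt{1/\delta}/\epsilon+\log(1/\delta))$, we have $|\widehat{\mathrm{tr}}(L)-\mathrm{tr}(L)|\le\epsilon\,\mathrm{tr}(L)$ with probability at least $1-\delta$.

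The key ingredients of that proof are as follows; I cite them rather than re-derive them.
\begin{itemize}
\item The range-finder bound $\|L-AA^\top L\|_F\lesssim \|L-L_k\|_F$, which yields $\|L-L_k\|_F\le \mathrm{tr}(L)/\sqrt{k}$ for PSD $L$.
\item The Hanson--Wright bound on the Hutchinson residual term.
\end{itemize}
If one prefers the stated $\sqrt{1/\delta}$ dependence, a Chebyshev (variance) argument on the residual Hutchinson estimator suffices directly. Its variance is at most $2\|(I-AA^\top)L(I-AA^\top)\|_F^2/(m_{vec}/3)$, and this is bounded by $O(\mathrm{tr}(L)^2/m_{vec}^2)$ via the low-rank step.

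\textbf{Step 3: add back the offset.} Adding the deterministic quantity $n\log\alpha$ to both sides of $(1-\epsilon)\mathrm{tr}(L)\le\widehat{\mathrm{tr}}(L)\le(1+\epsilon)\mathrm{tr}(L)$, and using Output line 8 of the algorithm, gives the claimed two-sided bound.

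\textbf{Main obstacle.} The only delicate point is Step 1's case distinction, together with making sure the shift uses the true $\lambda_{min}(Q)$ rather than its Gershgorin estimate. If a lower estimate $\tilde\alpha\le\alpha$ is used, positivity still holds because $\log(\lambda_i/\tilde\alpha)\ge0$, so I would state the argument for any $0<\tilde\alpha\le\lambda_{min}$.
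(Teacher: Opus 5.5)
Your proposal is correct and follows essentially the same route as the paper, which cites the Hutch++ relative-error guarantee for the PSD matrix $L$ (Theorem 4.1 of \cite{mbingui2026novel}, mirrored in the OSLQ lemma's proof) and then adds the deterministic offset $n\log(\alpha)$; your explicit PSD check, the observation that $\sqrt{\log(1/\delta)}\le\sqrt{1/\delta}$, and the Chebyshev variant are sound additions. One correction: the literal ``otherwise'' branch is $L=\log(Q/\gamma)$, not $\log(Q)$, and it has the eigenvalue $\log(\lambda_{min}/\gamma)=-\tfrac12\log\kappa<0$ whenever $\kappa>1$, so the uniform shift $L=\log(Q/\alpha)$ you mention is genuinely required for the PSD hypothesis rather than a cosmetic choice.
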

\begin{proposition}
Let $\widetilde Q \in \mathbb{R}^{n\times n}$ a SPD matrix, $L \in \mathbb{R}^{n\times n}$ define as in \eqref{eq_scaled_Q_krylov_matrix} a PSD  matrix. Let  $\widetilde L$ be the Arnoldi-IOP-LOG base approximation of $L$. Therefore for any $\epsilon \in (0,1)$ and $\delta \in (0,1)$, the trace estimator satisfies :
\begin{equation}
    \vert \mathrm{tr}(L)-\widehat{\mathrm{tr}}_{L++}(\widetilde L ) \vert  \le n C_{inc}\dfrac{4M_\rho}{(\rho -1)\rho^m} + \epsilon \mathrm{tr}(\widetilde L),
\label{eq_tot_error_Arnoldi}
\end{equation}
with $\widehat{\mathrm{tr}}_{L++}(\widetilde L )$, the Hutch++ trace applied to $\widetilde L$.
\end{proposition}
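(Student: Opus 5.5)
The plan is to split the total error with the triangle inequality into a deterministic Krylov (IOP) part and a stochastic Hutch++ part:
\begin{equation*}
\vert \mathrm{tr}(L)-\widehat{\mathrm{tr}}_{L++}(\widetilde L)\vert \le \vert \mathrm{tr}(L)-\mathrm{tr}(\widetilde L)\vert + \vert \mathrm{tr}(\widetilde L)-\widehat{\mathrm{tr}}_{L++}(\widetilde L)\vert .
\end{equation*}
Here $\widetilde L$ is the linear operator whose action on a vector $v$ is the Arnoldi-IOP-LOG output, shifted by $\log(\gamma/\alpha)v$ when $\alpha<1$. We treat $\widetilde L$ as a fixed matrix determined by the basis vectors it is applied to.

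\textbf{Deterministic term.} We write $\mathrm{tr}(L)-\mathrm{tr}(\widetilde L)=\sum_{j=1}^n e_j^\top (L-\widetilde L)e_j$. The identity shift $\log(\gamma/\alpha)\mathrm{I}$ appears exactly in both $L$ and $\widetilde L$, so it cancels. Each remaining summand is then $e_j^\top(\log(\widetilde Q)e_j - y_m^{IOP}(e_j))$. By Cauchy--Schwarz and the IOP-perturbed form of Lemma~\ref{lem_IOP_log_error} stated above, with $\Vert e_j\Vert_2=1$, each summand is bounded by $C_{inc}\,4M_\rho/((\rho-1)\rho^m)$. Summing over $j$ gives the first term $nC_{inc}\frac{4M_\rho}{(\rho-1)\rho^m}$. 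Lemma~\ref{lem_IOP_log_error} is stated on $[\lambda_{min},\lambda_{max}]$, but after the shift-scale it applies to the spectrum $[\kappa^{-1/2},\kappa^{1/2}]$ of $\widetilde Q$. We take $\rho$ and $M_\rho$ relative to the ellipse around that interval, which is consistent with the discussion preceding the statement.

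\textbf{Stochastic term.} Here we apply the Hutch++ guarantee, namely the Meyer et al.\ bound that underlies the lemma quoted from \cite{mbingui2026novel}. It gives, with probability at least $1-\delta$ when $m_{vec}=\mathrm{O}(\sqrt{1/\delta}/\epsilon+\log(1/\delta))$,
\begin{equation*}
\vert \mathrm{tr}(\widetilde L)-\widehat{\mathrm{tr}}_{L++}(\widetilde L)\vert\le \epsilon\,\mathrm{tr}(\widetilde L).
\end{equation*}
This is exactly the second term. Adding the two bounds yields \eqref{eq_tot_error_Arnoldi}, holding with probability at least $1-\delta$. That probabilistic qualifier is implicit in the statement.

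\textbf{Main obstacle.} Hutch++'s relative-error guarantee requires the matrix it estimates to be symmetric PSD, and $\widetilde L$ need be neither. With IOP the basis is not orthonormal and $H_m$ is not symmetric, so the action $v\mapsto\widetilde Lv$ may even fail to be linear in $v$, since $V_m$ depends on $v$. My first attempt would be to interpret $\widetilde L$ as the matrix of the Hutch++ queries and argue two things. First, when $C_{inc}\,4M_\rho/((\rho-1)\rho^m)$ is smaller than $\lambda_{min}(L)$, a Weyl-type perturbation keeps the symmetric part of $\widetilde L$ PSD. Second, Hutch++ only sees quadratic forms $u^\top\widetilde L u$, which depend only on the symmetric part $\tfrac12(\widetilde L+\widetilde L^\top)$, whose trace equals $\mathrm{tr}(\widetilde L)$. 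If this cannot be made rigorous, the fallback is to state the second term under the explicit assumption that $\widetilde L$ is (approximately) symmetric PSD, as the surrounding text effectively does by modelling IOP through $C_{inc}$.
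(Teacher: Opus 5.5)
Your proposal matches the paper's proof: the same triangle-inequality split, the deterministic term bounded by $n$ copies of the IOP-perturbed Krylov bound (you justify it column by column via $e_j^\top(L-\widetilde L)e_j$ and Cauchy--Schwarz, where the paper only cites the cyclic property of the trace), and the stochastic term bounded by Theorem~1.1 of Meyer et al., whose probability-$(1-\delta)$ qualifier you make explicit. The symmetry/PSD/linearity caveat you raise for $\widetilde L$ is real, but the paper does not address it either (it applies Hutch++ to $\widetilde L$ directly), so your fallback assumption is the honest reading; your proposed rescue would not work as stated anyway, since with $\alpha=\lambda_{\min}(Q)$ one has $\lambda_{\min}(L)=0$ (no Weyl margin), and Hutch++ does not see only quadratic forms, because its sketch $A=\mathrm{orth}(\widetilde L S)$ involves $\widetilde L$ itself.
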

\begin{proof}
Decompose the trace as:
$$\mathrm{tr}(L) - \widehat{\mathrm{tr}}_{L ++}(\widetilde L) = \mathrm{tr}(L) - \mathrm{tr}(\widetilde L) + \mathrm{tr}(\widetilde L) - \widetilde{\mathrm{tr}}_{L ++}(\widetilde L).  $$
Taking the absolute value and by Triangle inequality, we obtain
\begin{equation}
  \vert \mathrm{tr}(L) - \widehat{\mathrm{tr}}_{L ++}(\widetilde L) \vert \le \vert \mathrm{tr}(L) - \mathrm{tr}(\widetilde L)\vert  + \vert \mathrm{tr}(\widetilde L) - \widehat{\mathrm{tr}}_{L ++}(\widetilde L)\vert.   
\label{eq_eq_tot_error_Arnoldi_proof}
\end{equation}
By  Theorem 1.1  \cite{meyer2021hutch++} we have $\vert \mathrm{tr}(\widetilde L) - \widehat{\mathrm{tr}}_{L ++}(\widetilde L)\vert \le \epsilon \mathrm{tr}(\widetilde L)$, and by the cyclic property of the trace, we have $$\vert \mathrm{tr}(L) - \mathrm{tr}(\widetilde L)\vert \le \dfrac{4nC_{inc}M_\rho}{(\rho -1)\rho^m}.$$ Thus we obtain $\eqref{eq_tot_error_Arnoldi}$ by substituting the previous terms in \eqref{eq_eq_tot_error_Arnoldi_proof}.
\end{proof}
\section{ Log Determinant Estimation using Optimal Stochastic lanczos \\ Quadrature (OSLQ)}
\label{sec_So_slq}
The Stochastic Lanczos Quadrature, originally introduced by Ubaru et al. \cite{ubaru2017fast} combines three key ingredients, namely, the stochastic trace estimator, Gaussian quadrature, and the Lanczos algorithm to efficiently estimate the spectral sums of large matrices. To reduce the variance of  SLQ, Zongyuan Han et al. \cite{han2023suboptimal} proposed a variance-reduced SLQ estimator for positive semi-define (PSD) matrix, by integrating  SLQ with Hutch++ by constructing an orthonormal basis spanning the  range of $f(Q)S$. In contrast, in this work, we construct the approximation subspace directly directly from $QS$ following the original Hucth++ for symmetric positive define matrix (SPD) in order to yield a simpler and more computationally efficient framework  for variance reduction. In the following, we present our proposed methodology. 
\subsection{Lanczos Quadrature approximation}
Let $Q \in \mathbb{R}^{n\times n}$ be a real symmetric positive definite (SPD) matrix and let $f: (0,\infty) \to \mathbb{R}$ be a real-valued function defined on the spectrum of $Q$, such as $f(x) = \log x$.
A key observation in~\cite{ubaru2017fast} is that for any unit-norm vector $v \in \mathbb{R}^n$, the quadratic form $v^\top f(Q) v$ admits the following \emph{Riemann-Stieltjes integral} representation:
\begin{equation}
I = v^\top f(Q) v = \sum_{i=1}^n f(\lambda_i)\mu_i^2
= \int_{a}^{b} f(t)\, d\mu(t),
\label{eq_stieltjes}
\end{equation}
where the measure $\mu(t)$ is the piecewise constant function defined as 
\begin{equation}
    \mu(t) =   
    \begin{cases}
      0,\quad \text{if} \quad t<a=\lambda_1,\\
      \sum_{j=1}^{i-1} \mu_j^2 \quad \text{if} \quad \lambda_{i-1}\leq t < \lambda_i, i=2, \dots ,n,\\
      \sum_{j=1}^{n} \mu_j^2 \quad \text{if} \quad b=\lambda_n\leq t,
    \end{cases}
\end{equation}
with the eigenvalues $\lambda_i$ ordered non decreasingly.
Equation~\eqref{eq_stieltjes} expresses $v^\top f(A)v$ as a spectral integral of $f$ against the measure $d\mu$.
To approximate~\eqref{eq_stieltjes}, one may run $m$ steps of the Lanczos algorithm beginning from $v$:
$$
\mathcal{K}_m(Q, v) = \mathrm{span}\{v, Qv, Q^2v, \ldots, Q^{m-1}v\},
$$
yielding an orthonormal basis $V_m = [v_1, \dots, v_m]$ and a symmetric tridiagonal matrix
$$
T_m = V_m^\top Q V_m =
\begin{bmatrix}
\alpha_1 & \beta_1 \\
\beta_1 & \alpha_2 & \ddots \\
& \ddots & \ddots & \beta_{m-1} \\
& & \beta_{m-1} & \alpha_m
\end{bmatrix}.
$$
It is well-established~\cite{golub1994matrices,meurant2006lanczos} that the Lanczos tridiagonalization induces an $m$-point Gaussian quadrature rule for \eqref{eq_stieltjes}:
\begin{equation}
v^\top f(Q) v \approx e_1^\top f(T_m) e_1
= \sum_{k=1}^{m} \tau^2_k f(\theta_k), \quad \tau_k^2 = [e_1^\top u_k]
\label{eq:gauss_quadrature}
\end{equation}
where $(\theta_k,u_k), \quad k =1,\dots m$, are eigenpairs of $T_{m}$  by using $v $ as the starting vector  \cite{ubaru2017fast}.

\subsection{Log Determinant Estimation using Optimal Stochastic Lanczos Quadrature (OSLQ)}
\label{OSLQ}

For a symmetric matrix $M$, the stochastic trace identity is given by : 
\begin{equation}
\mathrm{tr}(M) = \mathbb{E}_v[v^\top M v],
\end{equation}
where $v$ is a random probe vector satisfying $\mathbb{E}[vv^\top] = I_n$
(e.g., $v_i \in \{\pm 1\}$ Rademacher or $v \sim \mathcal{N}(0,I_n)$).

Applying the similar idea for  $M = \log(Q)$, we have 
\begin{equation}
\mathrm{tr}(\log(Q)) = \mathbb{E}_v[v^\top \log(Q)v]
\approx \frac{1}{m_{vec}}\sum_{j=1}^{m_{vec}} v_j^\top \log(Q) v_j,
\label{eq:hutch_trace}
\end{equation}
where $m_{vec}$ is the number of probe vectors. Although unbiased, this estimator
may have a high variance \cite{aune2014parameter,meyer2021hutch++,mbingui2026novel}. To reduce variance, we combined the Lanczos quadrature method with Hutch ++ \cite{meyer2021hutch++}.  Since Hutch++ is originally designed for positive semidefinite matrices (PSD), to ensure that $\log(Q)$ is PSD we apply the same technique as in \cite{mbingui2026novel} by defining the normalized matrix as : 
\begin{equation}
J\coloneqq
\begin{cases}
    \frac{1}{\alpha}Q,&\textnormal{if}~~\,\lambda_{min}(Q) <1\\
    Q,&\textnormal{otherwise},
    
\end{cases}
\label{scaled_sigma_matrix_slq}
\end{equation}
where $\alpha = \lambda_{max}(Q)$. Consequently, when $\lambda_{min}(Q)<1$,the eigen values of $J$ satisfies  $$0< \lambda_i(J)\le 1,$$ 
since $$\lambda_i(J)=\frac{\lambda_i(Q)}{\lambda_{max}(Q)}.$$
Consequently,the eigen values of $-\log(J)$ are nonnegative because 
$$-\log(\lambda_i(J))\ge 0,\qquad \text{for}\quad 0< \lambda_i(J)\le 1 .$$
Hence, the matrix operator $-\log(J)$ is PSD, and so Hutch++ can be applied. From \eqref{scaled_sigma_matrix_slq}, we have, when $\lambda_{min}(Q)<1$,
\begin{equation}
   \log(\det(Q)) = n\log(\alpha)-\mathrm{tr}( -\log(J)).
\end{equation}
Therefore, to compute $\log(\det(Q))$, it suffices to efficiently estimate $\mathrm{tr}(\log(J))$. Next, we describe the approach based on Hutch++ algorithm.

\subsection{Trace estimation with Hutch ++}
We decompose $\mathrm{tr}(\log(J))$ into a deterministic low-rank part  and a stochastic residual part. Let $S \in \mathbb{R}^{n\times m_{vec}/3}$ be a Rademacher or Gaussian random matrix. We build a dominant subspace by
\begin{equation}
Y = J S, \qquad A = \operatorname{orth}(Y),
\label{eq:power_sketch}
\end{equation} where $\mathrm{orth(Y )}$ is the orthonormal matrix corresponding to the QR decomposition. 

Define the orthogonal projector $P = AA^\top$.
We decompose the trace as
\begin{equation}
\mathrm{tr}(\log(J))
= \mathrm{tr}(P\log(J)P) + \mathrm{tr}((\mathrm{I}-P)\log(J)(\mathrm{I}-P)).
\label{eq_trace_decomp_slq}
\end{equation}
The first term is evaluated  deterministically (low-rank part), and the second term represents
the residual stochastic contribution.

We avoid the  cost required in Han et al. \cite{han2023suboptimal} in computing $\operatorname{orth}(\log(J)S)$ by computing $\operatorname{orth}(JS)$ in order to obtain $A$.

\vspace{0.5em}
\paragraph{\textbf{Deterministic Subspace Contribution}}

Since $A^\top A = \mathrm{I}_{s}$, the deterministic low - rank term simplifies to
\begin{equation}
\mathrm{tr}(P\log(J)P)
= \mathrm{tr}(A^\top \log(J) A)
= \sum_{i=1}^{m_{vec}/3} q_i^\top \log(J) q_i,
\label{eq:deterministic_part}
\end{equation}
where $q_i$ is the $i$th column of $A$.
Each quadratic form $q_i^\top \log(J) q_i$ is approximated using Lanczos Quadrature method given by \eqref{eq:gauss_quadrature}.

\vspace{0.5em}
\paragraph{\textbf{Stochastic Residual Contribution}}
The residual term in \eqref{eq_trace_decomp_slq} can be written as
\begin{equation}
\mathrm{tr}_{\mathrm{res}}
= \mathrm{tr}((\mathrm{I}-P)\log(J)(\mathrm{I}-P))
= \mathbb{E}[v^\top (\mathrm{I}-P)\log(J)(I-P)v].
\label{eq:residual_def}
\end{equation}
We estimate this with $r=m_{vec}/3$ random probes $\{v_{j}\}_{j=1}^{m_{vec}/3}$.
Each probe is projected onto the orthogonal complement of $\mathrm{span}(A)$:
\begin{equation}
z_j = (\mathrm{I} - AA^\top) v_j.
\end{equation}
Averaging over $m_{vec}/3$ probes gives
\begin{equation}
\mathrm{tr}_{res} \approx \widehat{\mathrm{tr}}_{\mathrm{res}}
= \frac{3}{m_{vec}}\sum_{j=1}^{m_{vec}/3} z_j^\top \log(J) z_j,
\end{equation}
Where the corresponding quadratic form $z_j^\top \log(J) z_j$ is approximated using \eqref{eq:gauss_quadrature}
Combining both parts from~\eqref{eq:deterministic_part} and \eqref{eq:residual_def},
the proposed Optimal Stochastic Lanczos Quadrature (OSLQ)
estimator for $\log\det(Q)$ is
\begin{equation}
\widehat{\mathrm{tr}(\log(J)}) \approx \mathrm{tr}(A^\top \log(J)A) + \widehat{\mathrm{tr}}_{res}.
\label{eq_vrslq_final}
\end{equation}
The method is summarized by the following algorithm 
\begin{algorithm}
 \caption{Approximation of $\log\det(Q)$ using OSLQ}
\label{algo_log_det_oslq}
\begin{algorithmic}[1]
 \REQUIRE SPD matrix, $Q\in \mathbb{R}^{n\times n}$. The number of queries, $m_{vec}$. Spectral interval, $[\lambda_{min},\lambda_{max}]$.
 \STATE Define a scaled matrix $J\in \mathbb{R}^{n\times n} $ as in \eqref{scaled_sigma_matrix_slq}. 
 \STATE  Sample sketching matrix $S\in \mathbb{R}^{n\times \frac{1}{3}m_{vec}}$ a Rademacher (or Gaussian) random matrix with i.i.d $\{+1,-1\} $ with probability $\dfrac{1}{2}$.
 \STATE  Compute $Y=JS $ . 
 \STATE  Form an orthonormal basis $A$ ($A \in \mathbb{R}^{n\times  \frac{1}{3}m_{vec}}$) for the span of $Y$ via $QR$ decomposition.
\FOR{$i=1,k$}
\STATE Form a unit vector $q_i=A(:,i)$
\STATE $T $= Lanczos($J,q_i,m$) ( Use $q_i$ as starting vector and apply $m$ Lanczos steps on $J$).
\STATE $[X,\Theta] = \mathrm{eig}(T)$ and compute $\tau_k=[e_1^\top,x_k]$ for $k=1,\dots m$
\STATE $\mathrm{tr}(A^\top \log(J)A) \gets \mathrm{tr}(A^\top \log(J)A) + \sum_{k=1}^{m} \tau^2_k f(\theta_k)  $
\ENDFOR
\FOR{$j=1, m_{vec}/3$}
\STATE Form a unit vector $z_j= \dfrac{(\mathrm{I}-AA^\top){v_j}}{\Vert (\mathrm{I}-AA^\top){v_j} \Vert_2}$ (with $v_j$ Rademacher vectors)
\STATE $T'$ = Lanczos($J,z_j,m'$) ( Use $z_j$ as starting vector and apply $m'$ Lanczos steps on $J$).
\STATE  $[X',\Theta'] = \mathrm{eig}(T')$ and compute $\tau'_k=[e_1^\top,x'_k]$ for $k=1,\dots m'$
\STATE $\widehat{\mathrm{tr}}_{res} \gets \dfrac{3}{m_{vec}}\left( \widehat{\mathrm{tr}}_{res} + \Vert (\mathrm{I}-AA^\top)v_j\Vert_2^2 \sum_{k=1}^{m'} \tau'^2_k f(\theta_k)\right)$
\ENDFOR
 \STATE  $\widehat{\mathrm{tr}(\log(J))} \approx \mathrm{tr}(A^\top \log(J)A) + \widehat{\mathrm{tr}}_{res}$
 \STATE \textbf{Output: } $ \widehat{\log(\det(Q))} \approx n\log(\alpha)-\widehat{\mathrm{tr}(-\log(J))} .$
\end{algorithmic}
\end{algorithm}
\newpage
\subsection{OSLQ error analysis}
In this section, we provide the detailled proofs for the error bounds. Throughout, $J \in \mathbb{R}^{n \times n}$ is SPD with spectrum in $\sigma(J) \subset (0,\infty)$ and $f\equiv \log$ is analytic on a region $\mathcal{E}_\rho$ on a region containing $[\lambda_{min}, \lambda_{max}] $ with $\mathcal{E}_\rho$ the Bernstein ellipse with foci at $-1,1$, and major semiaxis $(\rho + \rho^{-1})/2$ and parameter $\rho > 1$.
$$ M_{\rho} = \max_{z \in \mathcal{E}_\rho} \vert \log(z)\vert. $$
\begin{lemma}[Lanczos Quadrature approximation error\footnote{For the proof of  Lemma \ref{lem_LQ error}, refer to \cite[Theorem 4.2]{ubaru2017fast}, and for minor correction readers may refer to   \cite[Theorem 3]{cortinovis2022randomized} and \cite[Lemma 4.1]{han2023suboptimal}} (\cite{cortinovis2022randomized}, Theorem 3,\\ \cite{ubaru2017fast},Theorem 4.2, \cite{han2023suboptimal},Lemma 4.1 )  ]
\label{lem_LQ error}
    Let $J$ be a SPD with spectral interval  $\sigma (J) \subset (0,\infty )$. Let $v\in \mathbb{R}^n$ with $\vert \vert v \vert \vert_2 = 1$. Then Then the $m$-step Lanczos quadrature approximation applied to $J$ satisfies : 
    \begin{equation}
    \label{eq_lanc_qua_approx_error}
        \vert I - K_m(v)\vert \leq \dfrac{4M_{\rho}}{(\rho-1)\rho^{2m} },
    \end{equation}
    where $$K_m(v) = \sum_{k=1}^{m} \tau^2_k f(\theta_k). $$
\end{lemma}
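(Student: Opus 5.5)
The plan is the standard route from Gauss quadrature exactness to polynomial approximation on an interval; this is the argument behind \cite[Theorem 4.2]{ubaru2017fast} as corrected in \cite{cortinovis2022randomized}.

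\textbf{Step 1 (reduction to the reference interval).} First map the spectral interval $[\lambda_{min},\lambda_{max}]$ of $J$ affinely onto $[-1,1]$ via $t\mapsto \frac{2t-\lambda_{min}-\lambda_{max}}{\lambda_{max}-\lambda_{min}}$. Set $g(x)=\log(t(x))$, which is analytic inside the image ellipse $\mathcal{E}_\rho$ and satisfies $|g|\le M_\rho$ there. Since $J$ is SPD, this is legitimate as long as $\rho$ is chosen so that the preimage of $\mathcal{E}_\rho$ avoids $(-\infty,0]$. Both $I=v^\top f(J)v$ and $K_m(v)$ are invariant under this relabelling, because the Lanczos process commutes with affine maps of the matrix.

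\textbf{Step 2 (exactness).} Next use that the $m$-point Gauss rule from $m$ Lanczos steps started at the unit vector $v$ is exact for polynomials of degree at most $2m-1$. Concretely, $v^\top p(J)v=e_1^\top p(T_m)e_1$ for all $p\in\mathcal{P}_{2m-1}$. The weights $\tau_k^2$ are nonnegative and sum to $\|v\|_2^2=1$, and the measure $\mu$ also has total mass $1$. Hence, for any $p\in\mathcal{P}_{2m-1}$,
\[
|I-K_m(v)| \le \Big|\int (g-p)\,d\mu\Big| + \Big|\sum_k\tau_k^2 (g-p)(\theta_k)\Big| \le 2\max_{x\in[-1,1]}|g(x)-p(x)|.
\]
This uses that the Ritz values $\theta_k$ lie in $[\lambda_{min},\lambda_{max}]$, which follows by interlacing.

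\textbf{Step 3 (Chebyshev truncation).} Take $p$ to be the Chebyshev truncation of $g$ of degree $2m-1$. Bernstein's bound on the coefficients, $|a_j|\le 2M_\rho\rho^{-j}$, gives
\[
\|g-p\|_\infty\le \sum_{j\ge 2m}2M_\rho\rho^{-j}=\frac{2M_\rho}{(\rho-1)\rho^{2m-1}}.
\]
Combined with Step 2, this yields $4M_\rho/((\rho-1)\rho^{2m-1})$. To reach the sharper exponent $\rho^{2m}$ stated in the lemma, I would instead truncate at degree $2m$. That requires exactness up to degree $2m$, which fails for Gauss quadrature. I therefore expect this exponent bookkeeping to be the main obstacle.

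I would handle it as in \cite[Theorem 3]{cortinovis2022randomized}. Note that the Chebyshev polynomial $T_{2m}$ contributes an error that can be absorbed, or equivalently rewrite the geometric tail as $\sum_{j\ge 2m}\rho^{-j}=\rho^{-2m}\rho/(\rho-1)$ and absorb the factor $\rho$ by redefining the constant. If exactness alone does not give $\rho^{-2m}$ cleanly, I would accept the bound with the constant as stated in the cited references. The precise power, $2m$ versus $2m-1$, is exactly the minor correction those references discuss. The remaining steps are routine.
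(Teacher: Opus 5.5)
Your Steps 1--3 are the standard argument behind the references the paper cites; the paper itself gives nothing beyond the citation. Your affine-map reading in Step 1 is also the right one: taken literally, an ellipse with foci $\pm1$ contains $0$, which makes $M_\rho=\infty$. These steps correctly give
\[
|I-K_m(v)|\le \sum_{j\ge 2m}|a_j|\,|E(T_j)| \le \frac{4M_\rho}{(\rho-1)\rho^{2m-1}} = \frac{4M_\rho}{(1-\rho^{-1})\rho^{2m}},
\]
where $a_j$ are the Chebyshev coefficients of $g$ and $E(p)=\int p\,d\mu-\sum_k\tau_k^2p(\theta_k)$ is the quadrature error functional. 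The gap is the final step, and it cannot be closed. The extra factor $\rho$ is not a constant. $\rho>1$ is a free parameter: any admissible ellipse may be used, and admissible $\rho$ are large when $\kappa(J)$ is near $1$. The lemma, by contrast, fixes the constant $4$, so ``redefining the constant'' changes the statement. The $T_{2m}$ term cannot be absorbed either. An $m$-point Gauss rule is not exact at degree $2m$, and $a_{2m}E(T_{2m})$ is generically the leading error term. Deferring to the references does not help, because the corrected bound there carries the factor $1/(1-\rho^{-1})$, i.e.\ it is exactly what you derived. The lemma has effectively replaced $1-\rho^{-1}$ by $\rho-1$ and so lost one factor of $\rho$.

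In fact, under this intended reading the stated inequality is false, so no proof can exist. Take $m=1$, $J=\mathrm{diag}(1-\epsilon,1+\epsilon)$ and $v=(1,1)^\top/\sqrt{2}$. The one-step Lanczos matrix is the scalar $v^\top Jv=1$, so $K_1(v)=\log 1=0$. On the other hand $I=\tfrac{1}{2}\log(1-\epsilon^2)$, so $|I-K_1(v)|\approx\epsilon^2/2$. Under $t=1+\epsilon z$, every $\rho$ with $\epsilon(\rho+\rho^{-1})/2<1$ is admissible. Take $\epsilon=10^{-2}$ and $\rho=100$. The image ellipse is then essentially the disc of radius $1/2$ centred at $t=1$, so $M_\rho=|\log(0.49995)|\approx 0.693$. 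The lemma's bound is $4(0.693)/(99\cdot 10^{4})\approx 2.8\times10^{-6}$, but the true error is $\approx 5.0\times10^{-5}$. Your $\rho^{2m-1}$ bound gives $\approx 2.8\times10^{-4}$, as it must. More generally, choosing $\rho\approx 2s/\epsilon$ with fixed $s\in(0,1)$ makes the claimed bound $O(\epsilon^3)$ against a true error of about $\epsilon^2/2$. The missing piece is precisely $a_2E(T_2)\approx(-\epsilon^2/4)\cdot 2$. So what your argument proves, and what is true, is the lemma with $\rho^{2m}$ replaced by $\rho^{2m-1}$. The same correction must then propagate to \Cref{lem_qua_error_sub}, \Cref{lem_qua_err_res} and \Cref{prop_tot_err_oslq}.
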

The following two lemma give respectively the quadrature error on subspace part and residual part.
\begin{lemma}[Quadrature error on the subspace part\footnote{The proof of this Lemma is part of the proof of Lemma 4.5 \cite{han2023suboptimal}.}]
\label{lem_qua_error_sub}
Let $A=[q_1, \dots,q_s],$ with $\Vert q \Vert_2=1$ and let 
\begin{equation}
\widehat{\mathrm{tr}}_{sub}:=\sum_{i=1}^{m_{vec}/3}K_m(q_i); \qquad \mathrm{tr}_{sub}:= \mathrm{tr}(A\log(J)A)=\sum_{i=1}^{m_{vec}/3}q_i^\top\log(J)q_i,   \label{eq_quad_error_def} 
\end{equation}
where $K_m(q_i)$ is the m-step Gauss- Lanczos approximation to the quadratic form $q_i^\top\log(J)q_i$.
Then
\begin{equation}
    \vert \widehat{\mathrm{tr}}_{sub} - \mathrm{tr}_{sub} \vert \leq \dfrac{4sM_\rho}{(\rho-1)\rho^{2m}},
    \label{eq_quad_err_sub}
\end{equation}
with $s=\dfrac{m_{vec}}{3}.$
\end{lemma}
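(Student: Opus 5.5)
The bound follows from Lemma \ref{lem_LQ error} applied column by column, combined with the triangle inequality. By the definitions in \eqref{eq_quad_error_def}, the difference splits into a sum over the columns of $A$:
\begin{equation*}
\widehat{\mathrm{tr}}_{sub} - \mathrm{tr}_{sub} = \sum_{i=1}^{s}\big(K_m(q_i) - q_i^\top \log(J) q_i\big), \qquad s = \frac{m_{vec}}{3}.
\end{equation*}
Here I use $\mathrm{tr}(A^\top \log(J) A) = \sum_i q_i^\top \log(J) q_i$. This is just the trace written as a sum of diagonal entries of the $s\times s$ matrix $A^\top\log(J)A$.

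Next I check that the hypotheses of Lemma \ref{lem_LQ error} hold for each $q_i$. Since $A=\operatorname{orth}(JS)$ comes from a QR factorization, its columns are orthonormal, so $\Vert q_i\Vert_2=1$. The matrix $J$ is SPD with $\sigma(J)\subset(0,\infty)$, as required. In Algorithm \ref{algo_log_det_oslq}, each $K_m(q_i)$ is computed by running $m$ Lanczos steps on $J$ started from $q_i$, with $f=\log$. This is exactly the $m$-point Gauss quadrature \eqref{eq:gauss_quadrature} for the Stieltjes integral \eqref{eq_stieltjes} with $v=q_i$. Lemma \ref{lem_LQ error} therefore gives, for each $i$,
\begin{equation*}
\big|K_m(q_i) - q_i^\top\log(J)q_i\big| \le \frac{4M_\rho}{(\rho-1)\rho^{2m}}.
\end{equation*}
The constant $M_\rho$ depends only on the Bernstein ellipse and on $\log$, not on $i$.

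Applying the triangle inequality to the sum and inserting this uniform bound gives
\begin{equation*}
\big|\widehat{\mathrm{tr}}_{sub} - \mathrm{tr}_{sub}\big| \le \sum_{i=1}^{s}\frac{4M_\rho}{(\rho-1)\rho^{2m}} = \frac{4sM_\rho}{(\rho-1)\rho^{2m}},
\end{equation*}
which is \eqref{eq_quad_err_sub}.

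The argument is deterministic. The matrix $A$ is random, but the bound holds for every realization of $A$, so no expectation or probability is needed.

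The only delicate step is checking that the ellipse and spectral-interval assumptions of Lemma \ref{lem_LQ error} match the setting of this section. The Bernstein ellipse with foci $\pm1$ has to be understood after the affine map from $[\lambda_{\min}(J),\lambda_{\max}(J)]$ onto $[-1,1]$, as in \cite{ubaru2017fast}. The constant $M_\rho$ must then be taken over the preimage of that ellipse, and $\rho$ must be chosen so that this preimage avoids the branch cut of $\log$ on $(-\infty,0]$. With this understood, I would simply state that the constant in Lemma \ref{lem_LQ error} is the same for every unit starting vector. That uniformity is what lets the bound be multiplied by $s$.
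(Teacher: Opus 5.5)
Your proposal is correct and takes essentially the same approach as the paper, which defers to the proof of Lemma 4.5 in \cite{han2023suboptimal}: apply the single-vector bound of Lemma~\ref{lem_LQ error} to each unit column $q_i$, then sum the $s$ identical bounds using the triangle inequality. Your clarifications are also correct and fill in points the paper leaves implicit: the trace should read $\mathrm{tr}(A^\top\log(J)A)$, the quadrature bound is uniform over unit starting vectors because the measure is always supported on $\sigma(J)$, and the Bernstein ellipse must be understood after the affine map of the spectral interval onto $[-1,1]$.
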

\begin{lemma}[Quadrature error on the residual part \footnote{The proof of the \Cref{lem_qua_err_res} is part of the proof of Theorem 4.1 in \cite{han2023suboptimal}}] Let 
\begin{equation}
\widehat{\mathrm{tr}}_{res}:= \frac{3}{m_{vec}}\sum_{j=1}^{m_{vec}/3}K_m(z_j), \qquad \widetilde{\mathrm{tr}}_{res} := \frac{3}{m_{vec}}\sum_{j=1}^{m_{vec}/3}z_j^\top \log(J)z_j, 
\label{eq_quad_error_res_def}
\end{equation}

where $z_j=(I-AA^\top)v_j$, $K_m(z_j)$ is the  m-step Gauss-Lanczos approximation to the quadratic form $z_j^\top \log(Q)z_j$. Then
\begin{equation}
    \big \vert  \widehat{\mathrm{tr}}_{res}  -   \widetilde{\mathrm{tr}}_{res}\vert \leq \dfrac{4nM_{\rho}}{(\rho-1)\rho^{2m}}. 
 \label{eq_qua_err_res}
\end{equation}
\label{lem_qua_err_res}
\end{lemma}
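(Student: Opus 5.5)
The bound follows by applying Lemma \ref{lem_LQ error} to each probe and summing. The one subtlety is normalization: the residual vectors $z_j=(\mathrm{I}-AA^\top)v_j$ are not unit vectors, while the lemma is stated only for $\Vert v\Vert_2=1$.

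\textbf{Step 1 (reduce to unit vectors).} By linearity of the difference,
\[
\widehat{\mathrm{tr}}_{res}-\widetilde{\mathrm{tr}}_{res}=\frac{3}{m_{vec}}\sum_{j=1}^{m_{vec}/3}\big(K_m(z_j)-z_j^\top\log(J)z_j\big).
\]
By the triangle inequality it suffices to bound each summand. For $z_j\neq 0$ (if $z_j=0$ the term vanishes), write $z_j=\Vert z_j\Vert_2\,\hat z_j$ with $\Vert\hat z_j\Vert_2=1$. As in Algorithm \ref{algo_log_det_oslq}, the quadrature for $z_j$ is defined by running Lanczos from $\hat z_j$ and rescaling, so $K_m(z_j)=\Vert z_j\Vert_2^2K_m(\hat z_j)$. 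The quadratic form scales the same way: $z_j^\top\log(J)z_j=\Vert z_j\Vert_2^2\,\hat z_j^\top\log(J)\hat z_j$. Hence Lemma \ref{lem_LQ error} gives
\[
\big|K_m(z_j)-z_j^\top\log(J)z_j\big|\le \Vert z_j\Vert_2^2\,\frac{4M_\rho}{(\rho-1)\rho^{2m}}.
\]

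\textbf{Step 2 (bound $\Vert z_j\Vert_2^2$).} The matrix $\mathrm{I}-AA^\top$ is an orthogonal projector, so it has operator norm at most $1$. Therefore
\[
\Vert z_j\Vert_2^2\le\Vert v_j\Vert_2^2 = n
\]
for Rademacher probes, whose entries are $\pm1$. This step is where the factor $n$ in \eqref{eq_qua_err_res} comes from.

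\textbf{Step 3 (average).} Summing over $j$ and multiplying by $3/m_{vec}$ averages $m_{vec}/3$ terms, each bounded by $n\,\frac{4M_\rho}{(\rho-1)\rho^{2m}}$. This gives exactly $\frac{4nM_\rho}{(\rho-1)\rho^{2m}}$.

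\textbf{Main obstacle.} The delicate point is Step 2's norm bound for Gaussian probes, where $\Vert v_j\Vert_2^2$ is random and not deterministically bounded by $n$. I would state the lemma for Rademacher probes, as the algorithm uses. For Gaussian probes I would either condition on the event $\Vert v_j\Vert_2^2\le n(1+o(1))$ or replace the bound by its expectation, $\mathbb{E}\Vert z_j\Vert_2^2=n-s\le n$.

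I would also check two hypotheses of Lemma \ref{lem_LQ error}. First, the Lanczos run from $\hat z_j$ must be on $J$, whose spectrum lies in the analyticity region. Second, the lemma must be applied with the same $f=\log$ and parameter $\rho$; the statement's mention of $\log(Q)$ should read $\log(J)$.
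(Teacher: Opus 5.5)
Your proof is correct and follows essentially the same route as the argument the paper defers to (the residual-quadrature step in the proof of Theorem 4.1 of \cite{han2023suboptimal}): write $z_j=\Vert z_j\Vert_2\hat z_j$, apply the Lanczos quadrature lemma to $\hat z_j$ to get a per-probe error of at most $\Vert z_j\Vert_2^2\,\frac{4M_\rho}{(\rho-1)\rho^{2m}}$, bound $\Vert z_j\Vert_2^2\le\Vert v_j\Vert_2^2=n$ because $\mathrm{I}-AA^\top$ is an orthogonal projector, and average over the $m_{vec}/3$ probes. Your side remarks are also right: the deterministic factor $n$ needs Rademacher probes (as used in Algorithm~\ref{algo_log_det_oslq}), and the $\log(Q)$ in the statement should read $\log(J)$.
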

\begin{lemma}
Let $J$ be defined as in \eqref{scaled_sigma_matrix_slq} with $Q \in \mathbb{R}^{n\times n}$ a symmetric positive definite matrix. If the Hutch ++ is implemented with $m_{vec}= \mathrm{O}\left(\sqrt{1/\delta}/\epsilon + \log(1/\delta) \right)$. Then with probability at least $1 -\delta $, with $\epsilon  \in (0,1), ~~\delta \in (0,1)$, we have
\begin{equation}
 n\log(\alpha)- (1+\epsilon)\mathrm{tr}(-\log(J)) \lesssim \log(\widehat {\det (Q)}) \lesssim  n\log(\alpha) - (1-\epsilon)\mathrm{tr}(-\log(J)),
 \label{eq_oslq_hutchpp}
\end{equation}
where $\log(\widehat {\det (Q)})$ is the log-determinant approximation of the matrix $Q$ using \Cref{algo_log_det_oslq}.
\end{lemma}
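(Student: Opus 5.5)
The plan is to treat the statement as a direct consequence of the Hutch++ guarantee of \cite[Theorem 1.1]{meyer2021hutch++} applied to the PSD matrix $-\log(J)$. Two identities carry the argument: $\log(\det(Q)) = n\log(\alpha) - \mathrm{tr}(-\log(J))$, and the output of \Cref{algo_log_det_oslq}, namely $\log(\widehat{\det(Q)}) = n\log(\alpha) - \widehat{\mathrm{tr}}(-\log(J))$. The symbol $\lesssim$ indicates that the Lanczos quadrature errors of \Cref{lem_qua_error_sub} and \Cref{lem_qua_err_res} are suppressed, since they decay like $\rho^{-2m}$. This follows the pattern of the corresponding lemma for OSA-IOP, which was quoted from \cite{mbingui2026novel}.

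\textbf{Step 1: PSD property.} For $J$ in \eqref{scaled_sigma_matrix_slq}, every eigenvalue satisfies $\lambda_i(J) = \lambda_i(Q)/\lambda_{max}(Q) \in (0,1]$. Hence $B := -\log(J)$ is symmetric PSD, as already noted after \eqref{scaled_sigma_matrix_slq}. The Hutch++ hypotheses therefore hold for $B$. Estimating $\mathrm{tr}(\log(J))$ is the same as estimating $-\mathrm{tr}(B)$, because all quadratic forms only change sign.

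\textbf{Step 2: Hutch++ guarantee.} The estimator in \eqref{eq_vrslq_final} consists of an exact term $\mathrm{tr}(A^\top B A)$ plus a Hutchinson estimate on $(I-P)B(I-P)$, using $m_{vec}/3$ sketch vectors and $m_{vec}/3$ probe vectors. With exact quadratic forms, \cite[Theorem 1.1]{meyer2021hutch++} with $m_{vec} = O(\sqrt{\log(1/\delta)}/\epsilon + \log(1/\delta))$ gives, with probability at least $1-\delta$,
\begin{equation*}
(1-\epsilon)\,\mathrm{tr}(B) \le \widehat{\mathrm{tr}}(B) \le (1+\epsilon)\,\mathrm{tr}(B).
\end{equation*}
The requirement $O(\sqrt{1/\delta}/\epsilon + \log(1/\delta))$ stated in the lemma is weaker, so it is covered.

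\textbf{Main obstacle.} The Meyer et al. analysis builds the subspace from $\mathrm{orth}(BS)$, whereas OSLQ uses $A = \mathrm{orth}(JS)$. The key ingredient is a low-rank approximation bound of the form $\|B - PBP\|_F \lesssim$ (best rank-$k$ Frobenius error of $B$). Because $J$ and $B$ are simultaneously diagonalizable, I would first try to show the following. If $J$ is replaced by a function with the same eigenvectors and the same ordering of eigenvalues, the projection-cost-preserving property carries over. However, $-\log$ reverses the ordering: the small eigenvalues of $J$ become the large eigenvalues of $B$. So this step may fail in general.

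The fallback is to note that, for a Rademacher or Gaussian sketch, the residual term is unbiased regardless of how $A$ is chosen. Its variance is bounded by $2\|(I-P)B(I-P)\|_F^2/(m_{vec}/3) \le 2\|B\|_F^2\cdot 3/m_{vec}$. Combined with $\|B\|_F \le \mathrm{tr}(B)$ (valid since $B$ is PSD) and Chebyshev's or Hanson--Wright's inequality, this gives a relative $\epsilon$ guarantee up to constants. This fallback is consistent with the loose $\sqrt{1/\delta}$ dependence and the $\lesssim$ notation in the statement.

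\textbf{Step 3: conclusion.} Multiply the two-sided bound by $-1$ and add $n\log(\alpha)$. This yields
\begin{equation*}
n\log(\alpha) - (1+\epsilon)\,\mathrm{tr}(-\log(J)) \le \log(\widehat{\det(Q)}) \le n\log(\alpha) - (1-\epsilon)\,\mathrm{tr}(-\log(J)).
\end{equation*}
The additive Lanczos errors from \Cref{lem_qua_error_sub} and \Cref{lem_qua_err_res} are absorbed into $\lesssim$. This is exactly \eqref{eq_oslq_hutchpp}.
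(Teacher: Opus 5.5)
Apart from your ``Main obstacle'' paragraph, your argument is the paper's proof. The paper quotes the two-sided bound $(1-\epsilon)\mathrm{tr}(-\log(J))\le\widehat{\mathrm{tr}}(-\log(J))\le(1+\epsilon)\mathrm{tr}(-\log(J))$ from Theorem~4.1 of \cite{mbingui2026novel}, negates it and adds $n\log(\alpha)$. It never addresses that \Cref{algo_log_det_oslq} builds $A=\mathrm{orth}(JS)$ rather than $\mathrm{orth}(-\log(J)S)$.

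You are right that this mismatch is the crux, but your fallback does not resolve it. Chebyshev or Hanson--Wright using only $\|(I-P)B(I-P)\|_F\le\|B\|_F\le\mathrm{tr}(B)$ is the plain Hutchinson analysis. It needs $m_{vec}=O(1/(\delta\epsilon^2))$, resp.\ $O(\log(1/\delta)/\epsilon^2)$, for relative error $\epsilon$. With the stated budget $m_{vec}=O(\sqrt{1/\delta}/\epsilon+\log(1/\delta))$ it only gives relative error of order $\sqrt{\epsilon}$ (times a power of $\delta$). So the fallback is not ``consistent with'' the stated complexity: the $\epsilon$-dependence is off by a square. The $1/\epsilon$ rate of Hutch++ comes from $\|(I-P)B(I-P)\|_F\le 2\|B-B_k\|_F\le 2\,\mathrm{tr}(B)/\sqrt{k}$, with $B_k$ the best rank-$k$ approximation of $B$ and $k\sim m_{vec}$. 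That requires $\mathrm{range}(P)$ to nearly contain the dominant eigenspace of $B$. But $\mathrm{range}(JS)$ is biased toward the large eigenvalues of $J$, which are the small eigenvalues of $B$.

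In fact no argument can close this gap for the algorithm as written. Take $u=n^{-1/2}(1,\dots,1)^\top$ and $Q=I-(1-e^{-N})uu^\top$. Then $\alpha=1$, $\lambda_{\min}(Q)=e^{-N}<1$, $J=Q$, and $B=-\log(J)=N\,uu^\top$ with $\mathrm{tr}(B)=N$. Each column of $JS$ has component $e^{-N}u^\top S_i$ along $u$, so with overwhelming probability over $S$ we get $\|Pu\|_2=O(e^{-N})$. The deterministic part $N\|Pu\|_2^2$ is therefore negligible. The output is essentially $\frac{3N}{m_{vec}}\sum_j(u^\top v_j)^2$, a plain Hutchinson estimate with relative standard deviation about $\sqrt{6/m_{vec}}$. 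Since $J$ has only two distinct eigenvalues, Lanczos quadrature is exact for $m\ge 2$, so nothing can be hidden in $\lesssim$. With $m_{vec}\sim\delta^{-1/2}\epsilon^{-1}$ we have $\epsilon\sqrt{m_{vec}}\to 0$ as $\epsilon\to 0$. Hence the relative error exceeds any fixed multiple of $\epsilon$ with probability tending to one. Standard Hutch++, sketching $BS$, would return $\mathrm{tr}(B)$ exactly here.

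What your line of argument can honestly deliver is either the Hutchinson-rate bound with $m_{vec}=O(\log(1/\delta)/\epsilon^2)$, or the stated rate for a variant that sketches with $\log(J)S$, as in \cite{han2023suboptimal}. One minor further point: Step~1's claim $\lambda_i(J)\in(0,1]$ holds only in the branch $\lambda_{\min}(Q)<1$ of \eqref{scaled_sigma_matrix_slq}.
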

\begin{proof}
From  Theorem 4.1 in \cite{mbingui2026novel}, we have 
$$(1-\epsilon)\mathrm{tr}(-\log(J)) \le \mathrm{tr}(\widehat{-\log(J))} \le (1+\epsilon)\mathrm{tr}(-\log(J)).$$
This implies 
$$-(1+\epsilon)\mathrm{tr}(-\log(J)) \le \mathrm{tr}(\widehat{-\log(J))} \le (1-\epsilon)\mathrm{tr}(-\log(J)).$$
Adding $n\log(\alpha)$ to the previous relation holds \eqref{eq_oslq_hutchpp}. 
\end{proof}
\begin{proposition}
Let $J \in \mathbb{R}^{n\times n}$  define as in \eqref{scaled_sigma_matrix_slq}. Let $\widehat{tr}_H$ denotes the hutch++ trace estimation in which the bilinear form are approximated by Lanczos quadrature
\label{prop_tot_err_oslq}, and $\mathrm{tr}_{m}$, the exact Hutch ++ estimator of  trace of $\log(J)$ approximated by Lanczos quadrature. Define 
$$ \mathrm{tr}=\mathrm{tr}(\log(J)).$$ Therefore for $\epsilon \in (0, 1)$ and $\delta  \in (0, 1)$, the trace error with probability at least $1 - \delta$ satisfies 
\begin{equation}
  \vert \mathrm{tr}-  \widehat{\mathrm{tr}}_H  \vert \le \epsilon \mathrm{tr} + \dfrac{4nM_{\rho}}{(\rho-1)\rho^{2m}} + \dfrac{4sM_{\rho}}{(\rho-1)\rho^{2m}}, 
\label{eq_tot_error_1}
\end{equation}
with $s = \dfrac{m_{vec}}{3}.$
\end{proposition}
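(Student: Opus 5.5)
The plan is a triangle-inequality split of the total error into the Hutch++ sampling error and the two Lanczos quadrature errors, each controlled by a result already stated. Write $\widehat{\mathrm{tr}}_H = \widehat{\mathrm{tr}}_{sub} + \widehat{\mathrm{tr}}_{res}$ for the quadrature-based estimator, with the two terms as in \eqref{eq_quad_error_def} and \eqref{eq_quad_error_res_def}. Write
\[
\mathrm{tr}_{m} := \mathrm{tr}_{sub} + \widetilde{\mathrm{tr}}_{res}
\]
for the ``exact'' Hutch++ estimator, in which every quadratic form $q_i^\top\log(J)q_i$ and $z_j^\top\log(J)z_j$ is evaluated exactly. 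Then
\[
\mathrm{tr} - \widehat{\mathrm{tr}}_H = (\mathrm{tr} - \mathrm{tr}_m) + (\mathrm{tr}_{sub} - \widehat{\mathrm{tr}}_{sub}) + (\widetilde{\mathrm{tr}}_{res} - \widehat{\mathrm{tr}}_{res}),
\]
and the triangle inequality reduces \eqref{eq_tot_error_1} to bounding the three terms separately.

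\textbf{Stochastic term.} I would apply the Hutch++ guarantee (Theorem 1.1 of \cite{meyer2021hutch++}) to the PSD matrix $-\log(J)$, exactly as in the preceding lemma. With $m_{vec} = \mathrm{O}(\sqrt{1/\delta}/\epsilon + \log(1/\delta))$, this gives, with probability at least $1-\delta$,
\[
|\mathrm{tr} - \mathrm{tr}_m| \le \epsilon\,|\mathrm{tr}|.
\]
Here $|\mathrm{tr}| = \mathrm{tr}(-\log J)$, which I read as the quantity intended by ``$\epsilon\,\mathrm{tr}$'' in the statement. Since $\mathrm{tr}_m$ is linear in the matrix, flipping the sign of $\log J$ is harmless.

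\textbf{Quadrature terms.} The second term is bounded by $4sM_\rho/((\rho-1)\rho^{2m})$ directly by Lemma \ref{lem_qua_error_sub}, because each $q_i$ is a unit vector and Lemma \ref{lem_LQ error} applies termwise. The third term is bounded by $4nM_\rho/((\rho-1)\rho^{2m})$ by Lemma \ref{lem_qua_err_res}. Both bounds are deterministic, so they hold on the same event of probability at least $1-\delta$. Summing the three bounds gives \eqref{eq_tot_error_1}.

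\textbf{Main obstacle.} The delicate point is the residual quadrature term, because the $z_j$ are not unit vectors. Lemma \ref{lem_LQ error} must be applied to $z_j/\|z_j\|_2$, which introduces a factor $\|z_j\|_2^2$. For Rademacher probes this factor is at most $\|v_j\|_2^2 = n$, since $I - AA^\top$ is an orthogonal projector. Combined with the averaging factor $3/m_{vec}$, this gives the factor $n$ in Lemma \ref{lem_qua_err_res}. I would cite that lemma for this step and only check that the normalization in Algorithm \ref{algo_log_det_oslq} matches the definition of $\widehat{\mathrm{tr}}_{res}$ used there.
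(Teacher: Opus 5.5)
Your three-term split is the paper's own proof: the same triangle inequality, the two quadrature lemmas, and Theorem 1.1 of Meyer et al.\ for the sampling term. Two of your refinements are correct and more careful than the paper: reading $\epsilon\,\mathrm{tr}$ as $\epsilon|\mathrm{tr}|$ (since $\mathrm{tr}(\log J)\le 0$), and checking $\|z_j\|_2^2\le\|v_j\|_2^2=n$.

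The genuine gap, which the paper's proof shares, is the stochastic term. Theorem 1.1 of Meyer et al.\ concerns the estimator whose range sketch is built from the \emph{same} PSD matrix $M$ whose trace is estimated. Its proof rests on $\|(I-P)M\|_F\le 2\|M-M_k\|_F$ for $P$ the projector onto $\mathrm{range}(MS)$. In OSLQ, $M=-\log J$, but $P=AA^\top$ with $A=\mathrm{orth}(JS)$. Since $-\log$ is decreasing, the dominant eigenvectors of $J$ are exactly those on which $M$ is smallest, so this bound is not available.

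Concretely, take $Q=I-(1-\eta)uu^\top$ with $u=n^{-1/2}\mathbf{1}$ and $0<\eta<1$. Then $J=Q$ and $M=-\log(\eta)\,uu^\top$ has rank one, so genuine Hutch++ is exact. By contrast, $\|Pu\|_2=O(\eta\sqrt{m_{vec}/n})$, so for $n\gg m_{vec}$ your $\widetilde{\mathrm{tr}}_{res}$ is essentially plain Hutchinson on $M$. Its relative standard deviation is about $\sqrt{6/m_{vec}}$, so $m_{vec}=\mathrm{O}(\sqrt{1/\delta}/\epsilon+\log(1/\delta))$ cannot give relative accuracy $\epsilon$. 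Reading $\mathrm{tr}_m$ instead as genuine Hutch++ on $\log J$ does not rescue the argument. Theorem 1.1 then applies, but $\mathrm{tr}_m-\widehat{\mathrm{tr}}_H$ is no longer just the two quadrature errors, because the two estimators deflate different subspaces.

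What you can prove is a Hutchinson-level bound. Since $\mathrm{tr}_m$ contains $\mathrm{tr}(P\log(J)P)$ exactly, $|\mathrm{tr}-\mathrm{tr}_m|$ is the error of a Hutchinson estimate of $B=(I-P)M(I-P)$, with probes independent of $S$. This $B$ is PSD with $\|B\|_F\le\mathrm{tr}(B)\le\mathrm{tr}(M)=|\mathrm{tr}|$. Condition on $S$ and apply the Hanson--Wright tail bound for Hutchinson's estimator (the one Meyer et al.\ use for their residual term). This gives $|\mathrm{tr}-\mathrm{tr}_m|\le\epsilon|\mathrm{tr}|$ with probability at least $1-\delta$ once $m_{vec}=\mathrm{O}(\epsilon^{-2}\log(1/\delta))$. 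The proposition does not fix $m_{vec}$, so replacing the Hutch++ citation by this step completes your argument. Keeping the $1/\epsilon$ query count would require sketching with $\log(J)S$, as So-SLQ does.
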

\begin{proof}
Let us decompose the error as follows :
$$  \mathrm{tr}-  \widehat{\mathrm{tr}}_H = (\mathrm{tr}  - \mathrm{tr}_m)  + (\mathrm{tr}_m-\widehat{\mathrm{tr}}_H),$$  
by taking the absolute value and by Triangle inequality, we have 
$$ \vert  \mathrm{tr}-  \widehat{\mathrm{tr}}_H  \vert \le \vert \mathrm{tr}  - \mathrm{tr}_m \vert  + \vert \mathrm{tr}_m-\widehat{\mathrm{tr}}_H\vert,$$ 
this implies 
$$ \vert  \mathrm{tr}-  \widehat{\mathrm{tr}}_H  \vert \le \vert \mathrm{tr}  - \mathrm{tr}_m \vert  + \vert (\mathrm{tr}_{sub}+ \widetilde{\mathrm{tr}}_{res})-(\widehat{\mathrm{tr}}_{res} + \widehat{\mathrm{tr}}_{sub})\vert.$$
Applying the Triangle inequality to the previous equation yields
\begin{equation}
 \vert  \mathrm{tr}-  \widehat{\mathrm{tr}}_H  \vert \le \underbrace{\vert \mathrm{tr}  - \mathrm{tr}_m \vert}_{\text{(I)} }  + \underbrace{\vert \mathrm{tr}_{sub} -\widehat{\mathrm{tr}}_{sub} \vert}_{\text{(II)}} + \underbrace{\vert \widehat{\mathrm{tr}}_{res} - \widetilde{\mathrm{tr}}_{res} \vert}_{(III)}.
 \label{eq_tot_error_2}
\end{equation}
where (I), (II) and (III) represent, respectively, the quadrature error  on the subspace part and quadrature error  on the residual part.\\
 By Theorem 1.1 \cite{meyer2021hutch++}, \Cref{lem_qua_error_sub}, \Cref{lem_qua_err_res} we have respectively $\vert \mathrm{tr}  - \mathrm{tr}_m \vert \le \epsilon \mathrm{tr}$, $\vert \mathrm{tr}_{sub} -\widehat{\mathrm{tr}}_{sub} \vert \le \dfrac{4sM_{\rho}}{(\rho-1)\rho^{2m}}$, $\vert \widehat{\mathrm{tr}}_{res} - \widetilde{\mathrm{tr}}_{res} \vert\le \dfrac{4nM_{\rho}}{(\rho-1)\rho^{2m}}$.
 Substituting previous expressions into
 \eqref{eq_tot_error_2} gives \eqref{eq_tot_error_1}.
\end{proof}

\section{Numerical Experiments}
\label{Kry: experiments}
\label{experiments}
In this section, we compare the proposed log-determinant methods against a wide variety of sparse matrices. Our benchmark includes real data sets from the University of Florida Sparse Matrix Collection \cite{davis2011university}  carefully selected to provide controlled tests. The actual matrices occur over a range of important applications areas arise in computational fluid dynamics simulations, from finite element models in structural engineering, elasticity benchmarks used  material science, plasma physics, and modeling crystalline structures, etc. (see Table \ref{table 1 : kry}). Our methods are compared with the stochastic Lanczos Quadrature method (SLQ) \cite{ubaru2017fast} and the Suboptimal subspace construction for log-determinant approximation (So-SLQ) \cite{han2023suboptimal}. \\

\quad
This broad spectrum of matrices allows us to experiment with our method under all situations: very sparse and unstructured meshes (finite element analysis, computational fluid dynamics), structured systems based on graphs (power grids), dense local coupling (molecular and crystal models), physics-based operators (gyrokinetic plasma simulation). Such variation is required because it evidences how  our  methods : OSLQ and OSA-IOP perform in practice under differing sparsity patterns and spectral distributions (see Table \ref{table 1 : kry}).  All experiments were performed on a \textbf{MacBook Pro with Apple M1 Pro chip and 16 GB RAM, using Python 3.11.8}.\\

\begin{table}[htbp]
\caption{Description of sparse matrices used in the experiments. 
Real matrices are from the UF Sparse Matrix Collection \cite{davis2011university}.}
\label{table 1 : kry}
\centering
\renewcommand{\arraystretch}{1.5}
\begin{tabular}{|l| l| c|}
\hline
Matrix        & Application Domain              & Size ($n \times n$) \\
\hline
\texttt{pdb1HYS}      &  protein structure (bioinformatics)  & $36,417 \times 36,417$ \\
\texttt{boneS01}      & Model Reduction Problem   & $ 127,224 \times 127,224$ \\
\texttt{cfd1}  &   Fluid dynamics          & $70,656 \times 70,656$ \\
\texttt{ecology2} &   Circuit theory      & $ 999,999
 \times 999,999
$ \\
\texttt{af\_shell8}  &  Structural mechanics a       & $ 504,855 \times 504,855 $ \\
\texttt{bone010}  & Biomechanics    & $986,703 \times 986,703 $ \\
\texttt{thermomech\_TC}  & Thermal problem & $ 102,158 \times 102,158 $ \\
\texttt{audikw\_1}   & Structural Mechanics & $943,695 \times 943,695$ \\
 \texttt{crystm02}  & Material science (crystallography)  &  $13,965 \times 13,965$\\
 \texttt{gyro\_k} & Plasma physics (fusion)   & $ 17,361 \times 17,361 $ \\
 \texttt{wathen120}  & 2D FEM elasticity benchmark  & $36,441 \times 36,441$ \\
\hline
\end{tabular}

\end{table}

\begin{table}[htbp]
\caption{Comparison of log-determinant estimates. 
 Note that $m_l$ and $m_{vec} $ representes  the number of lanczos steps and the number of starting vectors for SLQ, So-SLQ, OSLQ respectively. For our experiments, we have used $m_{vec}=30$ as in \cite{ubaru2017fast}.Note that $m_{v}$ denotes the number  of matvec queries for the OSA-IOP. 
Estimate is the computed log-determinant, and Time is the runtime in seconds.}
\label{Kry : table 2}
\centering
\renewcommand{\arraystretch}{3} 
\setlength{\tabcolsep}{2.5pt}      
\small                          
\resizebox{\textwidth}{!}{%
\begin{tabular}{|l|c|ccc|ccc|ccc| ccc|}
\hline
Matrices &  Exact logdet
& \multicolumn{3}{c|}{SLQ} 
& \multicolumn{3}{c|}{So-SLQ}
& \multicolumn{3}{c|}{OSLQ} 
& \multicolumn{3}{c|}{ OSA-IOP} 
 \\
\cline{1-14}
&  Value 
&$m_l$& Estimate & Time (s)
&$m_{l} $ &  Estimate & Time (s)
&$m_{l} $ &  Estimate & Time (s)
&$m_{vec} $ &  Estimate & Time (s) 
   \\
\hline
\texttt{ecology2} & - &60 & 3394616.8760& 164.414& 60 & 3394517.439& 28.382&60 & 3395096.847&  19.090&20&3394663.480 &23.157 \\ 
\texttt{boneS01} & - &35 & 1104097.055& 12.754 &35 &1103916.204 & 5.265& 35&1104027.220 &3.686 & 15 &1104039.550 &4.366 \\
\texttt{af\_shell8} & - &60 &6466212.797 &78.745 &60 &6466668.617 &26.318 & 60&  6466421.121&19.396 & 20 & 6466560.818&16.678 \\
\texttt{bone010} & - & 60& 9290425.005& 187.239 &30&  9290503.241& 62.882& 30 &9290275.090 &48.953 &12 &9290385.037 &26.297\\
\texttt{wathen120}& 127080.312 &  60&127118.689 & 6.730&60&127166.423 & 1.635 & 60 &126949.937 &0.801 &18 &127054.412 &0.598\\
\texttt{thermomech\_TC}& - 546787 &25 & - 546784.465&3.970 &25 &-546763.185& 2.095&  25& -546724.461 &0.899 &12 &-546808.188 &1.372\\
\texttt{pdb1HYS}& 170081.683 &20 & 170296.485&9.828 &20 & 170278.765& 3.242 &20 & 170384.895& 2.140&12 &170239.132 &1.965\\
\texttt{cfd1}&  - 42892.386 & 150& - 42821.98 & 55.008& 150&-42932.668 & 6.521& 150 & -42877.325& 4.999& 9 &-42910.189&0.997\\
\texttt{crystm02}& - 406912.286 &20 &- 406899.155 & 5.548 &20 &-406940.962 &0.331 & 20& -401652.555 & 0.052&12 &-406939.714 &0.273\\
\texttt{gyro\_k} & 304741.205  & 20 & 305687.114 & 3.28&20 & 305585.703&1.011 &20 & 305671.426& 0.565&21 &305147.937 &0.958\\
\texttt{audikw\_1}& - &60 & 13305352.435& 227.067& 60 & 13332835.879& 93.594&60 &13333080.878 &89.380 &18 &13449025.057 &48.507 \\
\hline
\end{tabular} % 
}
\end{table}

In Table \ref{Kry : table 2}, we present Log-determinant estimation for a set of benchmark matrices. The Exact Log-determinants are computed using Cholesky factorization available in Scipy. A '-' symbol in  Table \ref{Kry : table 2} indicates that  the exact computation could not be performed on our hardware due to memory breakdowns. Our proposed  methods, OSLQ and  OSA-IOP, consistently demonstrate lower computational cost compare to  SLQ and So-SLQ. For a fair comparison, all methods,  SLQ, So-SLQ, OSLQ use  $m_{vec}=30 $ probe vectors following the configuration of  \cite{ubaru2017fast}. The  OSA-IOP  method is implemented with a maximum Krylov dimension of $m_{max}=30$. Similarly SLQ, So-SLQ and OSLQ share the same number of Lanczos iterations as detailed in Table \ref{Kry : table 2}. 
More particularly for So-SLQ, in all the experiments we set the subspace rank to $k=10$ and form the variance-reduction subspace from a
Gaussian sketch $(S \in \mathbb{R}^{n \times q}) $ with $(q=30)$ columns since, as Han et al. point out in \cite{han2023suboptimal}, in practice a small oversampling factor $(q \approx 3k) $ is enough to capture the main part of the spectrum of $(\log(Q))$ \cite{han2023suboptimal}. The number of Lanczos steps to  approximate the projected trace and the estimation of the residual trace is $(m_l)$.

\begin{table}[htbp]
\caption{Relative error comparison between SLQ, So-SLQ, OSLQ and  OSA-IOP. The exact Log-det is computed using Cholesky factorization.}
\label{Kry : table 3}
\centering
\renewcommand{\arraystretch}{1.5} 
\setlength{\tabcolsep}{1.5pt}      
\small                          
\resizebox{\textwidth}{!}{%
\begin{tabular}{|l|c|c|c|c|}
\hline
Matrices        & SLQ Rel. error & So-SLQ Rel. error & OSLQ Relative error             &  OSA-IOP Rel. error  \\
\hline

\texttt{pdb1HYS}      & $ 1.263 \times 10^{-3}$   & $1.159 \times 10^{-3}$ & $1.783 \times 10^{-3} $& $9.257 \times 10^{-4}$ \\
\texttt{cfd1}  &  $1.641 \times 10^{-3}$  & $ 9.391 \times 10^{-4}$ & $3.511 \times 10^{-4}$ & $4.151 \times 10^{-4}$\\
\texttt{thermomech\_TC}  & $4.636 \times 10^{-6}$ & $4.355 \times 10^{-5}$& $1.144 \times 10^{-4}$ & $3.875 \times 10^{-5}$ \\
 \texttt{crystm02}  & $3.226 \times 10^{-5}$& $7.042 \times 10^{-5}$ & $1.293 \times 10^{-2}$ & $ 6.741\times 10^{-5}$ \\
 \texttt{gyro\_k} & $3.104 \times 10^{-3}$ & $2.771\times 10^{-3}$& $3.052 \times 10 ^{-3}$& $ 1.335 \times 10^{-3}$ \\
 \texttt{wathen120}  & $3.020 \times 10^{-4}$  &$ 6.776 \times 10^{-4}$ & $1.026 \times 10^{-3}$ & $ 2.038\times 10^{-4}$ \\
\hline
\end{tabular}% 
}
\end{table}
\newpage
In Table \ref{Kry : table 3}, we report the relative errors obtained by the different methods SLQ, So-SLQ, OSLQ, and OSA-IOP on medium-sized matrices, using the Cholesky factorization as the reference (exact) Log-determinant. Overall, the OSA-IOP method consistently achieves the smallest relative error across the tested matrices.\vspace*{0.1cm}
\paragraph{\textbf{Maximum likelihood estimation for GRFs}} We now evaluate our proposed approaches for maximum likelihood estimation of Gaussian Random Fields (GRFs) (OSLQ and OSA-IOP). We simulate on such a field using the Wendland $C^2$ covariance function \cite{RasmussenWilliams2005} (with smoothness $q=1$) on a $900 \times 1200 $ grid ($n=1.08\times 10^6)$ to demonstrate the usage of Log-determinant computations in GRFs, as in \cite{ubaru2017fast}. The goal is to compute the grid maximizer 
\begin{equation}
    \widehat \theta = \arg \max_{\theta \in \Theta}\mathcal{L}(\theta), \quad \mathcal{L}(\theta) = \dfrac{1}{2}\log\det(Q(\theta))- \dfrac{1}{2}x^\top Q(\theta)x-\dfrac{n}{2}\log(2\pi).
\end{equation}
Using a matrix free - evaluation of $\log\det(Q(\theta))$ at each candidate $\theta \in \Theta=\{20,30,40,\\50,60,70,80\}$.
Set $\mathrm{tr}\log(Q(\theta))\approx \widehat{tr}(\log(Q(\theta))$.\\\\
When using  OSA-IOP, the $\widehat{ \mathrm{tr}}$ is computed using  \Cref{algo_log_det_iop}. Because \Cref{algo_log_det_iop} works on PSD matrices, to ensure that, we work on a shifted operator
$$H=\log(Q(\theta)/\gamma) + \log(\gamma/\alpha) \mathrm{I},$$
where $\mathrm{I}$  is the identity matrix. The matrix $H$ is PSD. We then apply \Cref{algo_log_det_iop} and reconstruct 
$$ \log\det(Q(\theta)) \approx \widehat{\mathrm{tr}}(H) + n\log(\alpha).$$

\begin{table}[htbp]
\caption{Estimated $\theta$  and resulting
 log likelihood on the $900\times1200$ grid using OSA-IOP.}
\label{table_log_likelihood_kiops}
\centering
\begin{tabular}{|l|c|c|}
\hline
$\theta$   & $\widehat{\log \det}Q(\theta)$  &  $L(\theta)$ \\
\hline
20 & $5.571\times 10^{6}$ & $1.577\times 10^{6}$\\
30 & $6.262\times 10^{6}$  & $1.784\times 10^{6}$\\
40 & $6.604\times 10^{6}$ &$1.853\times 10^{6}$\\
\textbf{50} & $\mathbf{6.803\times 10^{6}}$  &$\mathbf{1.868\times 10^{6}}$\\
60 & $6.932\times 10^{6}$ &  $1.853\times 10^{6}$\\
70 & $7.021\times 10^{6}$ & $1.819\times 10^{6}$\\
80 & $7.086\times 10^{6}$ &$1.770\times 10^{6}$\\
\hline
\end{tabular}
\end{table}
\begin{figure}[htbp]
\centering
\includegraphics[width=0.5\textwidth]{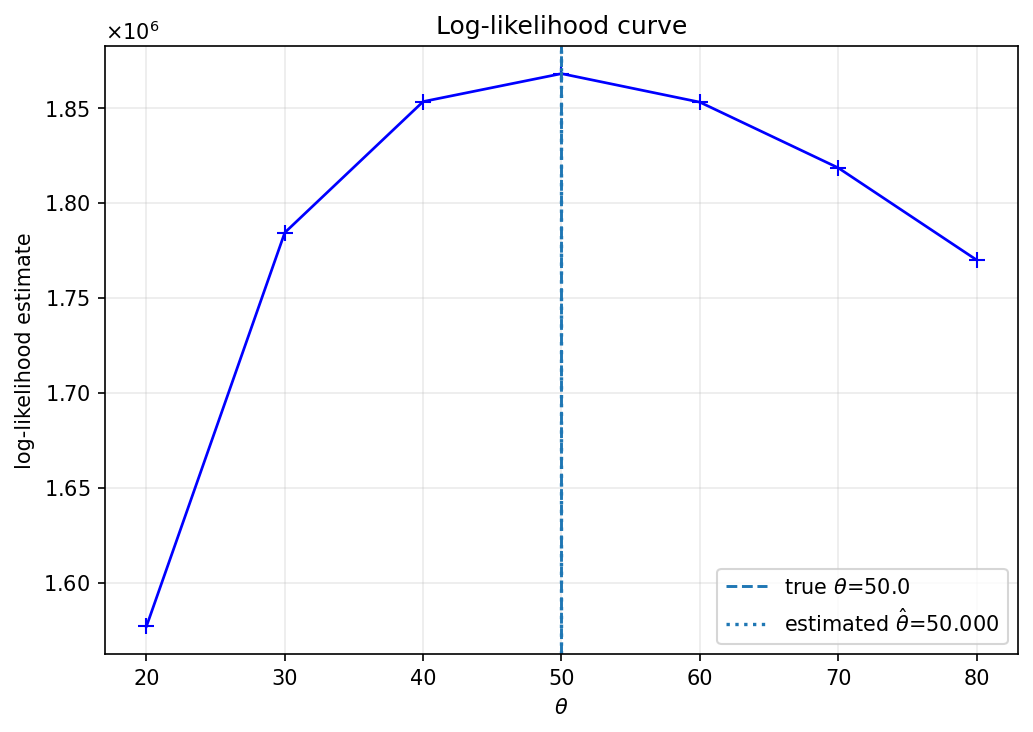}
\caption{Log-likelihood estimation using OSA-IOP}
\label{fig_Log_likelihood_kiops}
\end{figure}
\Cref{table_log_likelihood_kiops} and \Cref{fig_Log_likelihood_kiops} show that, the Maximum likelihood estimate using OSA-IOP with different candidates hyperparameter $\theta$, which  suggests the peak at $ \theta = 50$. That is,
maximum likelihood estimation estimates using OSA-IOP suggests the hyperparameter value to be $\widehat \theta =50$. The Log-determinant were computed using the maximum Krylov dimension $m_{max}=20$, probe vectors $m_{vec}=21$ with tolerance $\mathrm{tol}=10^{-6}$.

When using OSLQ,  the $\widehat{ \mathrm{tr}}$ is computed using  \Cref{algo_log_det_oslq}. Because \Cref{algo_log_det_oslq} works on PSD matrices, to ensure that, we work on a shifted operator
$$J=Q(\theta)/\alpha\qquad \text{with} \quad \alpha = \lambda_{max}(Q(\theta)).$$
This guaranties that matrix $J$ is PSD. We then  apply \Cref{algo_log_det_oslq} and reconstruct 
$$ \log\det(Q(\theta)) \approx  n\log(\alpha) - \widehat{\mathrm{tr}}(-\log(J)).$$

\begin{table}[htbp]
\caption{Estimated $\theta$ and resulting
log likelihood on the $900\times1200$ grid using OSLQ.}
\label{table_log_likelihood_oslq}
\centering
\begin{tabular}{|lc|c|}
\hline
$\theta$      &  $\widehat{\log \det}Q(\theta)$ &  $L(\theta)$ \\
\hline
20 & $5.5317\times 10^{6}$  & $1.5571\times 10^{6}$\\
30 & $6.2357\times 10^{6}$ & $1.7713\times 10^{6}$\\
40 & $6.5848\times 10^{6}$ &  $1.8437\times 10^{6}$\\
\textbf{50} & $\mathbf{6.7881\times 10^{6}}$ &  $\mathbf{1.8606\times 10^{6}}$\\
60 & $6.9200\times 10^{6}$  & $1.8469\times 10^{6}$\\
70 & $7.0106\times 10^{6}$  & $1.8134\times 10^{6}$\\
80 & $7.0776\times 10^{6}$  & $1.7657\times 10^{6}$\\
\hline
\end{tabular}
\end{table}
\Cref{table_log_likelihood_oslq} and \Cref{fig_Log_likelihood_kiops} show that the peak of the likelihood curve using OSLQ is currently located at $\widehat\theta=50$. Log-determinant were computed using $m=60$ lanczos step, and $m_{vec}=30$ probe vectors.

\begin{figure}[htbp]
\centering
\includegraphics[width=0.5\textwidth]{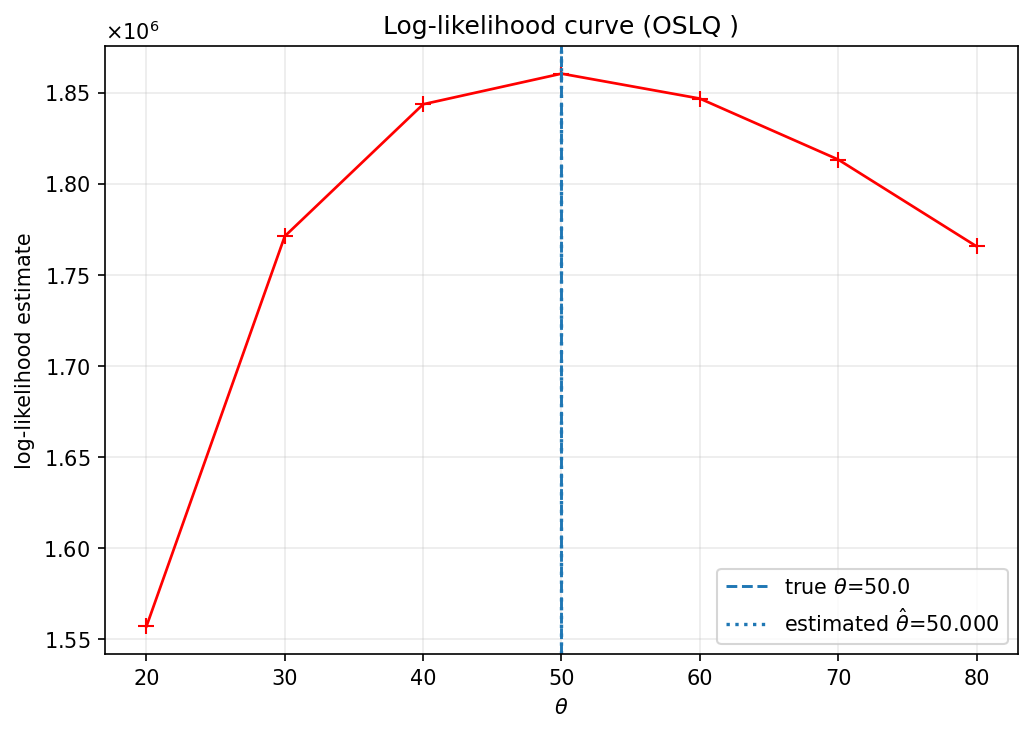}
\caption{Log-likelihood estimation using OSLQ}
\label{fig_Log_likelihood_oslq} 
\end{figure}

\section{Conclusion}
 In this work, we have presented the OSA-IOP (Optimal Stochastic Arnoldi with Incomplete Orthogonalisation) and OSLQ (Optimal Stochastic Lanczos Quadrature) methods for the  estimation of the logarithmic determinant. The OSA-IOP method adapts the Krylov Incomplete Orthogonalisation Procedure Solver (KIOPS), originally developed for exponential integrators, to efficiently approximate 
$ \log(Q)v$, and combines adaptive error control with Hutch++ variance reduction to significantly reduce computational cost while maintaining accuracy. In parallel, OSLQ combines Lanczos stochastic quadrature with Hutch++ and explicit orthogonality control, using Krylov subspaces as quadrature devices to accurately approximate the quadratic forms of the matrix logarithm. We derived approximation error bounds for both methods. Numerical experiments on large sparse matrices from real-world applications confirm the efficiency, robustness, and scalability of the proposed methods.
% \section*{Acknowledgments}

\appendix
\section{Proof of \Cref{lem_IOP_log_error}}
\label{Appendix 1}
\begin{proof}
Since $Q$ is SPD, there exists an orthogonal matrix $U$ and a diagonal
matrix $\Lambda = \mathrm{diag}(\lambda_1,\dots,\lambda_n)$ with
$0 < \alpha \le \lambda_i \le \beta$ such that
$$
    Q = U \Lambda U^\top.
$$
Write $v = U w$ with $w = U^\top v$.
Then
$$
    \log(Q)v = U \log(\Lambda) U^\top v
           = U \log(\Lambda) w.
$$

Let $V_m$ be the orthonormal Krylov basis produced from $v_1 = v/\Vert v\Vert_2$,
and let $H_m = V_m^\top Q V_m$ be the associated $m\times m$ 
 Hessenberg matrix.  
The Arnoldi relation reads
$$
    QV_m = V_m H_m + h_{m+1,m} v_{m+1} e_m^\top,
$$
with $v_{m+1}\perp \mathrm{range}(V_m)$ and $e_m$ the $m$ th canonical vector.
For any polynomial $q_{m-1}\in\Pi_{m-1}$,
\begin{equation}
\label{eq:poly-commute}
    q_{m-1}(Q)v
    \;=\;
    V_m\, q_{m-1}(H_m)\, e_1 \,\|v\|_2.
\end{equation}
In particular, for each polynomial $q_{m-1}$ of degree at most $m-1$, the
associated vector $q_{m-1}(Q)v$ lies in $\mathcal{K}_m(Q,v_1)$ and is
represented in the Lanczos basis via $q_{m-1}(H_m)$.

Let the eigen-decomposition of $H_m$ be
$$
    H_m = S \Theta S^\top,
    \qquad
    \Theta = \operatorname{diag}(\theta_1,\dots,\theta_m),
$$
with $S$ orthogonal.  Then
$$
    \log(H_m) = S f(\Theta) S^\top,
$$
and hence
$$
    V_m \log(H_m) e_1
    = V_m S \log(\Theta) S^\top e_1.
$$
Define a polynomial $p_{m-1}$ as the unique interpolating
polynomial of degree at most $m-1$ that satisfies
$$
    p_{m-1}(\theta_j) = \log(\theta_j),
    \qquad j=1,\dots,m.
$$
Then by functional calculus on $H_m$,
$$
    p_{m-1}(H_m) = \log(H_m),
$$
and therefore, using \eqref{eq:poly-commute},
$$
    y_m
    = \|v\|_2\, V_m \log(H_m)e_1
    = \|v\|_2\, V_m p_{m-1}(H_m)e_1
    = p_{m-1}(U)v.
$$
Thus the ideal $m$-step Krylov approximation coincides with
$\log(Q)v$ projected via a specific polynomial $p_{m-1}$.

Using $v = Uw$ and $\log(Q)v = U \log(\Lambda)w$, we obtain
$$
    \log(Q)v - y_m
    = \log(Q)v - p_{m-1}(Q)v
    = U\big(\log(\Lambda)-p_{m-1}(\Lambda)\big)w,
$$
where $\log(\Lambda)-p_{m-1}(\Lambda)$ is diagonal with entries
$\log(\lambda_i)-p_{m-1}(\lambda_i)$.
Taking the Euclidean norm and using the orthogonality of $U$,
\begin{align*}
    \Vert \log(Q)v - y_m\Vert_2
    &= \big\|\big(\log(\Lambda)-p_{m-1}(\Lambda)\big)w\big\|_2 \\
    &\le
       \max_{1\le i\le n} |\log(\lambda_i)-p_{m-1}(\lambda_i)| \,\|w\|_2 \\
    &=  \max_{1\le i\le n} |\log(\lambda_i)-p_{m-1}(\lambda_i)| \,\|v\|_2.
\end{align*}
Equivalently,
\begin{equation}
\label{eq:err-discrete}
   \|\log(Q)v - y_m\|_2
    \le
    \max_{1\le i\le n} |\log(\lambda_i)-p_{m-1}(\lambda_i)|\|v\|_2.
\end{equation}
Therefore, 
\begin{equation*}
    \Vert \log(Q)v - y_m\Vert_2 \le \frac{4M_\rho}{(\rho-1)\rho^{m-1}}\Vert v\Vert_2,
\end{equation*}
with $\displaystyle \max_{1\le i\le n}|\log(\lambda_i)-p_{m-1}(\lambda_i)| \le \frac{4M_\rho}{(\rho-1)\rho^{m-1}} $, the best approximation error of degree at most m.
\end{proof}
\subsection*{Reproducibility of computational results}The data used  are from University of Florida Sparse matrices \cite{davis2011university} and the 
code to reproduce the log-det  results in this paper are available at \\ \url{https://github.com/VerlonRoelMBINGUI/Optimal-Krylov-for-log-det-est.git}. 

 \bibliographystyle{plain}  
\bibliography{main}   
\end{document}